\newcommand{\beq}{\begin{equation}}
\newcommand{\eeq}{\end{equation}}
\newcommand{\DL}{D_L}
\newcommand{\Lr}{\Lambda^r}
\newcommand{\dm}{ \partial_x m_Q}
\newcommand{\Le}{\partial_x m_L(\sqrt{\varepsilon}\partial_x) }
\newcommand{\LeQ}{\partial_x m_Q(\sqrt{\varepsilon}\partial_x) }
\newtheorem{theorem}{Theorem}[section]
\newtheorem{lemma}[theorem]{Lemma}
\newcommand\reallywidehat[1]{%
\savestack{\tmpbox}{\stretchto{%
  \scaleto{%
    \scalerel*[\widthof{\ensuremath{#1}}]{\kern-.5pt\bigwedge\kern-.5pt}%
    {\rule[-\textheight/2]{1ex}{\textheight}}
  }{\textheight}%
}{0.5ex}}%
\stackon[1pt]{#1}{\tmpbox}%
}
\author{María Cabrera Calvo}
\address{LJLL (UMR 7598), Sorbonne Universit\'e, UPMC, 4 place Jussieu, 75005, Paris, France (M. Cabrera Calvo)}
\email{cabreracalvo@ljll.math.upmc.fr}
\author{Fr\'ed\'eric Rousset}
\address{Universit\'e Paris-Saclay, CNRS,   Laboratoire de Math\'ematiques d'Orsay (UMR 8628),  91405 Orsay Cedex, France (F. Rousset)}
\email{frederic.rousset@universite-paris-saclay.fr}
\author{Katharina Schratz}
\address{LJLL (UMR 7598), Sorbonne Universit\'e, UPMC, 4 place Jussieu, 75005, Paris, France (K. Schratz)}
\email{katharina.schratz@ljll.math.upmc.fr}
\begin{document}
\begin{abstract}
We introduce  a  novel class of time integrators for dispersive equations which allow us to reproduce the dynamics of the  solution from the classical $ \varepsilon = 1$ up to long wave {limit} regime $
\varepsilon \ll 1 
$ on the  natural time scale  of the PDE $t= \mathcal{O}(\frac{1}{\varepsilon})$. Most notably our new schemes converge with rates at order $\tau \varepsilon$    over long times  $t=  \frac{1}{\varepsilon}$.  
\end{abstract}

\title[]{Time integrators for dispersive equations \\in the long wave regime}

\maketitle

\section{Introduction}
As a model problem we  consider
\begin{gather}
\label{BBM}
\partial_t u(t,x) +\partial_x m_L(\sqrt{\varepsilon}\partial_x) u(t,x)+ \varepsilon\partial_x m_Q(\sqrt{\varepsilon}\partial_x) u^2(t,x)  =0 \quad (t,x) \in \mathbb{R}\times  \mathbb{T} 
\end{gather}
with smooth symbols $m_L, m_Q$ satisfying  for $\xi  \in \mathbb{R}$
\begin{equation}\label{ass1}
\begin{aligned}
& m_L(i\xi) \in \mathbb{R}, \quad m_L(i\xi) = m_L(-i\xi),\quad  m_Q(i\xi) \in \mathbb{R}, \quad m_Q(i\xi) = m_Q(-i\xi),\\
&\left \vert m_L^{(4)}(i\xi)\right\vert \leq \frac{c_L}{1+\vert \xi\vert^{\beta_L}}
, \quad  \left \vert m_Q(i\xi)\right\vert \leq \frac{1}{1+\vert \xi\vert^{\beta_Q}}, \quad 
 \left \vert m_Q'(i\xi)\right\vert \leq \frac{1}{1+\vert \xi\vert}
\end{aligned}
\end{equation}
for some $\beta_L, \beta_Q \geq 0$. The class of equations \eqref{BBM} includes a large variety of models such as  the Benjamin--Bona--Mahony  (BBM) equation 
\begin{equation}\label{bbm}
m_L(i\xi)= m_Q(i\xi) = \frac1{1+\xi^2},
\end{equation}
 the  Korteweg--de Vries (KdV) equation 
\begin{equation}\label{kdv}
m_L(i \xi)= 1 - \xi^2, \quad m_Q(i\xi) = 1
\end{equation} and the Whitham equation
\begin{equation*}\label{kdv}
m_L(i\xi)= \sqrt{\frac{\tanh(\xi)}{\xi}}, \quad m_Q(i\xi) = 1.
\end{equation*} 
The model \eqref{BBM} can be rigorously derived in the long wave regime from many physical models including water waves, plasma, etc., see, e.g.,   \cite{ASL,ChR,Craig,GErR,Guo}. In particular, rigorous error estimates between the solution of \eqref{BBM} and the solution of the original model are established on the natural time scale $t= \mathcal{O}(\frac{1}{\varepsilon})$.

In this paper we introduce a novel class of numerical integrators  for \eqref{BBM}   based on  the long wave  behaviour of the dispersion relation 
 \begin{equation}\label{wlim}
 \begin{aligned}
& i \xi  \left(1  -  \frac{ m_L^{(2)}(0)}{2}  \xi^2 \right)\quad+\quad\text{higher order terms} \quad \text{with}\quad  \xi =\sqrt{\varepsilon}  k, \, k \in \mathbb{Z}.
\end{aligned}
 \end{equation}
At first order the  {long wave limit preserving} (LWP)    scheme takes the form
\begin{equation}\label{scheme}
\begin{aligned}
u^{n+1} & = \mathrm{e}^{-\tau \partial_x m_L(\sqrt{\varepsilon}\partial_x) } \left[  u^n 
 - \frac{1}{3\alpha}  m_Q(\sqrt{\varepsilon}\partial_x)  \Big(e^{\tau\alpha \varepsilon\partial_x^3}   \Big( e^{-\tau\alpha \varepsilon\partial_x^3}   \partial_x^{-1}       u^n \Big)^2 - \Big(   \partial_x^{-1}     u^n\Big)^2+  2 \varepsilon \tau \widehat{u^n_0} \partial_x u^n\Big)
 \right]
\end{aligned}
\end{equation}
where we have set $\alpha =  \frac{ m_L^{(2)}(0)}{2}$. 
Details on its construction will be given in Section~\ref{sec:dev1}. The  scheme~\eqref{scheme} (and its second order counterpart, see \eqref{schema2}) will allow us to reproduce the dynamics of the  solution  $u(t,x)$ of \eqref{BBM}   up to   long wave regimes $
\varepsilon \ll 1 
$ on  the natural long time scale $t= \mathcal{O}(\frac{1}{\varepsilon})$. More precisely, at first ($\sigma =1$) and second-order ($\sigma =2$)  we will   establish the  global error estimates
$$
\Vert u(t_n)- u^n \Vert_{L^2}  \leq \tau^\sigma  t_n \varepsilon^{2}c_0 e^{c_1 t_n  {\varepsilon}} \quad \text{on long time scales}\quad  t_n \leq \frac{1}{\varepsilon},\quad \sigma =1,2
$$
where  $c_0, c_1$ depend on certain Sobolev norms of $u$ (depending on $\beta_L$ and $\beta_Q$). We refer to Theorem~\ref{thm:glob1}  and Theorem \ref{thm:glob2} for the precise error estimates.  Note that the time scale  $t= \mathcal{O}(\frac{1}{\varepsilon})$ is also the natural time scale on the continuous level, i.e., for the PDE \eqref{BBM} itself. 

Compared to classical schemes, e.g., splitting or exponential integrator methods, our long wave limit preserving integrators  in particular
\begin{itemize}
\item allow for  approximations on large natural time scales $t= \mathcal{O}(\frac{1}{\varepsilon})$
\item converge with  rates at order $\tau^\sigma \varepsilon^2 t$.
\end{itemize}
Surprisingly, we can even achieve convergence of order $\tau \varepsilon$, i.e., a gain in $\varepsilon$,  over long times  $t=  \frac{1}{\varepsilon}$.  

 For the analysis of long-time energy conservation for Hamiltonian  partial differential equation  with the aid of Modulated Fourier Expansion and Birkhoff normal forms we refer to \cite{HLW,HLW1,HLW2,FGP,FGP1,FGP2} and the references therein.  Here we in contrast prove long time error estimates on the solution itself. In case of the nonlinear {Klein}-{Gordon} equation with weak nonlinearity $\varepsilon^2 u^3$ long time error estimates of splitting methods were recently established in \cite{Bao}. 
 
 The main challenge in the  theoretical and numerical analysis of \eqref{BBM} on   long time scales $t= \mathcal{O}(\frac{1}{\varepsilon})$ lies in  the  loss of derivative in the nonlinearity. This loss of derivative is clearly seen in case of the KdV equation \eqref{kdv} for which we face a  Burger's type nonlinearity $ \varepsilon\partial_x u^2 $. However, even in case of the BBM equation \eqref{bbm}   where we expect some regularisation through the structure of the leading operators (note that $\beta_L = \beta_Q =2$),  the smoothing only holds  with loss in~$\varepsilon$
 \begin{equation}\label{regi2}
\left\Vert  \varepsilon  \LeQ u^2 \right\Vert_r \leq \text{min}\left( {\sqrt{\varepsilon}} \Vert u^2 \Vert_{r},2 \Vert u \partial_x u \Vert_{r}\right).
\end{equation}
For BBM this may allow first order error estimates  at order $\tau \sqrt{\varepsilon} $   for classical splitting or exponential integrator methods up to  time $t= \mathcal{O}(\frac{1}{\sqrt{\varepsilon}})$, but   not  on the natural time scale of the PDE that is $t= \mathcal{O}(\frac{1}{\varepsilon})$. Our  new long wave limit adapted  discretisation \eqref{scheme} in contrast allows for    long time error estimates at order $\tau \varepsilon $ on the natural time scale  $t= \mathcal{O}(\frac{1}{{\varepsilon}})$.

In case of the BBM equation with a regularising nonlinearity ($\beta_L = \beta_Q =2$) we can thanks to the estimate  \eqref{regi2} play with the gain in $\varepsilon$ and loss of derivatives. This will allow  error bounds also for non smooth solutions, however, only on time scales $t=1$. More precisely, one could prove first-order convergence  in $H^r$ for solutions in $H^r$ ($r>1/2$), i.e., without any loss of derivative, for short times  $t=1$ at the cost of no longer gaining in~$\varepsilon$. Such low regularity estimates on short time scales without gain in $\varepsilon$ also hold true for classical   schemes, see for instance  \cite{CS21} for the analysis in case of  splitting discretisations. 
%

Our  idea for LWP schemes can be extended to higher order. We will give details on the second order integrator on long time scales in Section \ref{sec:scheme2}. Note that for the classical KdV equation (that is $\varepsilon = 1$ and without  transport term $\partial_x$), and nonlinear Schr\"odinger equations resonance based schemes were recently introduced in  \cite{HoS16,OS18} and (short time) error estimates for time $t=1$ were proven.  We also refer to \cite{H1,H2,Clem,OSu} for splitting, finite difference and Lawson-type methods  for the classical KdV equation on  time scales $t=1$.
  \\

\noindent{\bf Outline of the paper.}  In Section \ref{sec:scheme1} and  Section \ref{sec:scheme2} we introduce the  first- and second-order LWP scheme and carry out their convergence analysis over long times $t=\mathcal{O}\left( \frac{1}{\varepsilon}\right)$. Numerical experiments in Section \ref{sec:num} underline our theoretical findings.\\

\noindent{\bf Notation and assumptions.} In the following we will assume that $m_L^{(2)}(0)=2$ which implies  (as $ \alpha = m_L^{(2)}(0)/2 $) that $ \alpha =1$ in \eqref{wlim}. Our analysis also holds true for any $ m_L^{(2)}(0)  \in \mathbb{R}$.  For practical implementation issues we will impose periodic boundary conditions that is  $x \in \mathbb{T} =[-\pi ,\pi]$. Our result can be extended to the full space $x\in \mathbb{R}$. We denote by $(\cdot,\cdot)$ the standard scalar product $
(f,g) = \int_{\mathbb{T}} f g dx
$  and by $\|.\|_r$ the standard $H^r(\mathbb{T})$ norm. In particular, for $r>1/2$, we will exploit the standard bilinear estimate
\begin{gather}
\label{bil_est}
\|fg\|_r \leq C_r \|f\|_r \|g\|_r.
\end{gather}
 For $v(x) = \sum_{k \in \mathbb{Z}} \hat v_k e^{i k x}$ we set $\partial_x^{-1} v(x) := \sum_{k\neq 0} \hat v_k e^{i k x}$. 
Let us also define $\Lambda = (1+\vert \xi \vert^2)^{\frac12}.$

\section{A first-order long wave limit preserving scheme}\label{sec:scheme1}
In a first section we will formally derive the  LWP scheme \eqref{scheme} (see Section \ref{sec:dev1}). Then we will carry out its convergence analysis and establish  long time error estimate  (see Section \ref{sec:err1}).
\subsection{Derivation of the scheme}\label{sec:dev1}
Recall  Duhamel's formula  of  \eqref{BBM} 
\begin{equation*}\label{expu1}
\begin{aligned}
u(t) &   = e^{- t \Le} u(0)
- \varepsilon \LeQ e^{-t \Le}   \int_0^t e^{s \Le} u^2(s)ds.
\end{aligned}
\end{equation*}
Iterating the above formula, i.e., using that 
\begin{equation*}\label{expu1}
\begin{aligned}
u(s) &   = e^{- s \Le} u(t_n)
+\mathcal{O}\left( s \varepsilon \LeQ  u^2\right)
\end{aligned}
\end{equation*}
we see that formally
\begin{equation*}\begin{aligned}
u(t) &   
 \approx e^{- t\Le} \left[u(0)
- \varepsilon \LeQ   \int_0^t e^{s \Le} \left( e^{- s \Le}  u(0)\right)^2 ds\right].
\end{aligned}
\end{equation*}
The key point lies in embedding the long wave limit behaviour  (cf. \eqref{wlim})
$$
\DL = \Le - (\partial_x + \varepsilon \partial_x^3) 
= \mathcal{O}\left( {\varepsilon^{2 } \partial_x^{5 }}   m_L^{(4)}(\sqrt{\varepsilon}\partial_x) \right)
$$
 into our numerical discretisation. This motivates (for sufficiently smooth solutions) the following approximation
\begin{equation*}\begin{aligned}
u(t) &   
 \approx   e^{-t \Le} \left[u(0)
- \varepsilon \LeQ   \int_0^t  e^{s(\partial_x + \varepsilon \partial_x^3) } \left( e^{- s (\partial_x + \varepsilon \partial_x^3) }  u(0)\right)^2 ds\right].
\end{aligned}
\end{equation*}
We may solve the oscillatory integral by the observation that
$$
\varepsilon\partial_x \int_0^t  e^{s(\partial_x + \varepsilon \partial_x^3) } \left( e^{- s (\partial_x + \varepsilon \partial_x^3) } v \right)^2 ds = \frac{1}{3}   \mathrm{e}^{t   \varepsilon \partial_x^3}   \left[ \mathrm{e}^{ -t \varepsilon \partial_x^3  }  (\partial_x^{-1} v)^2\right]   -\frac13 (\partial_x^{-1} v)^2+ 2 \varepsilon t \hat{v}_0 \partial_x v,
$$
see \eqref{resi}. Based on the long wave limit behaviour we thus find the following approximation
\begin{equation*}\begin{aligned}
u(t) &   
 \approx   e^{-t \Le} \left[u(0)
- m_Q(\sqrt{\varepsilon}\partial_x) \left(
 \frac{1}{3}   \mathrm{e}^{t   \varepsilon \partial_x^3}   \left( \mathrm{e}^{ -t \varepsilon \partial_x^3  }  (\partial_x^{-1} u(0))^2\right)   -\frac13 (\partial_x^{-1} v)^2+ 2 \varepsilon t \widehat{u(0)}_0 \partial_x u(0)\right)
\right]
\end{aligned}
\end{equation*}
which builds the basis of our LWP  scheme \eqref{scheme}.
\subsection{Error analysis}\label{sec:err1}
In this section we carry out the error analysis of the filtered scheme \eqref{scheme} over long times $t=\mathcal{O}\left( \frac{1}{\varepsilon}\right)$ We start with the local error analysis. For this purpose we will denote by $\varphi^t$ the exact flow of   \eqref{BBM} and by $\Phi^\tau$ the numerical flow defined by the scheme \eqref{scheme}, such that
$$
u(t_n+\tau) = \varphi^\tau(u(t_n) ) \quad \text{and}\quad u^{n+1} = \Phi^\tau(u^n).
$$
\subsubsection{Local error analysis}
We will exploit the following estimate which regularises for $\beta_Q > 1/2$.
\begin{lemma}
\label{lemma_reg}
Let $f\in H^{r+1-\beta_Q}(\mathbb{T})$. It holds that
\begin{equation*}
\|\varepsilon\LeQ f\|_r \leq \varepsilon^{1-\beta_Q} \|f\|_{r+1-\beta_Q}.
\end{equation*}
\end{lemma}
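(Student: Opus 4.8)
The plan is to diagonalise $\varepsilon\LeQ$ in Fourier space and reduce the statement to a single pointwise bound on its multiplier. Writing $f=\sum_{k\in\mathbb{Z}}\hat f_k e^{ikx}$, one has $\varepsilon\LeQ e^{ikx}=\varepsilon(ik)\,m_Q(i\sqrt{\varepsilon}k)\,e^{ikx}$, so by Parseval
\begin{equation*}
\Vert\varepsilon\LeQ f\Vert_r^2=\sum_{k\in\mathbb{Z}}(1+\vert k\vert^2)^{r}\,\varepsilon^2k^2\,\vert m_Q(i\sqrt{\varepsilon}k)\vert^2\,\vert\hat f_k\vert^2 .
\end{equation*}
Since $\Vert f\Vert_{r+1-\beta_Q}^2=\sum_k(1+\vert k\vert^2)^{r+1-\beta_Q}\vert\hat f_k\vert^2$, it suffices to prove, uniformly in $k$, the scalar multiplier inequality
\begin{equation*}
\varepsilon\,\vert k\vert\,\vert m_Q(i\sqrt{\varepsilon}k)\vert\le\varepsilon^{1-\beta_Q}(1+\vert k\vert^2)^{(1-\beta_Q)/2};
\end{equation*}
multiplying through by $(1+\vert k\vert^2)^{r}$ and summing against $\vert\hat f_k\vert^2$ then closes the argument.

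Next I would insert the decay hypothesis from \eqref{ass1}, namely $\vert m_Q(i\sqrt{\varepsilon}k)\vert\le(1+\vert\sqrt{\varepsilon}k\vert^{\beta_Q})^{-1}$, and rescale to the dispersive variable $\xi=\sqrt{\varepsilon}k$, using $\varepsilon\vert k\vert=\sqrt{\varepsilon}\,\vert\xi\vert$. After dividing by $\varepsilon^{1-\beta_Q}$ the estimate should collapse to an essentially $\varepsilon$-free comparison, the residual powers of $\varepsilon$ being absorbed by $\varepsilon\le1$ (so that $\varepsilon^{\beta_Q/2}\le1$). Concretely, for $\beta_Q\le1$ I would factor $\varepsilon^{\beta_Q}\vert k\vert=(\varepsilon^{\beta_Q/2}\vert k\vert^{\beta_Q})\,\varepsilon^{\beta_Q/2}\vert k\vert^{1-\beta_Q}$ and use $\tfrac{t}{1+t}\le1$ together with $\vert k\vert\le(1+\vert k\vert^2)^{1/2}$ to conclude directly. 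For $\beta_Q>1$ I would instead retain the denominator and reduce, via $\varepsilon^{-\beta_Q/2}\ge1$ and $\varepsilon+\xi^2\le1+\xi^2$, to the $\varepsilon$-free inequality $\xi(1+\xi^2)^{(\beta_Q-1)/2}\le1+\xi^{\beta_Q}$, which follows by chaining $\xi\le(1+\xi^2)^{1/2}$ with the subadditivity bound $(1+\xi^2)^{\beta_Q/2}\le1+\xi^{\beta_Q}$ (valid for $\beta_Q\le2$, which covers the models at hand).

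The crux, and the only genuinely delicate point, is that the sign of the exponent $1-\beta_Q$ reverses the monotonicity of the weight $(1+\vert k\vert^2)^{(1-\beta_Q)/2}$: the naive bound that is optimal for $\beta_Q\le1$ overshoots once $\beta_Q>1$. This is exactly the regularising-with-loss-in-$\varepsilon$ regime relevant to BBM ($\beta_Q=2$), where one must keep the full denominator $1+\vert\sqrt{\varepsilon}k\vert^{\beta_Q}$ and balance it against $(1+\vert k\vert^2)^{(1-\beta_Q)/2}$ rather than discarding it, and where the powers of $\varepsilon$ must be made to line up uniformly across low and high frequencies. I expect to spend the effort there; everything else is Parseval together with elementary scalar estimates.
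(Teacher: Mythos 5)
Your proof is correct and takes essentially the same route as the paper: Plancherel reduces the lemma to the pointwise multiplier bound $\varepsilon|k|\,|m_Q(i\sqrt{\varepsilon}k)|\le\varepsilon^{1-\beta_Q}(1+|k|^2)^{(1-\beta_Q)/2}$, which the paper asserts in a single line and which you actually verify via the case split $\beta_Q\le 1$ versus $\beta_Q>1$. The only (harmless) caveat is that your final subadditivity step $(1+\xi^2)^{\beta_Q/2}\le 1+\xi^{\beta_Q}$ requires $\beta_Q\le 2$; for larger $\beta_Q$ a multiplicative constant such as $2^{\beta_Q/2-1}$ appears, which the statement implicitly tolerates in any case.
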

\begin{proof}
The assertion follows thanks to the estimate
\begin{align*}
\|\varepsilon\LeQ f\|_r^2 & =  \sum_{k\in\mathbb{Z}} (1 + |k|)^{2r} \bigg| \frac{\varepsilon ik}{1+(\sqrt{\varepsilon} k)^{\beta_Q}} \bigg|^2 |\hat{f}_k|^2
 \leq \varepsilon^{2(1-\beta_Q)} \|f\|_{r+1-\beta_Q }^2.
\end{align*}
\end{proof}
\begin{lemma} \label{thm:loc} Fix $r>1/2$. Then, the local error  $ \varphi^\tau(u(t_n))- \Phi^\tau(u(t_n))$ satisfies for $$ \beta :=min(2, \beta_L + \beta_Q)$$
the estimate
\begin{align*}
\Vert \varphi^\tau(u(t_n))- \Phi^\tau(u(t_n)) \Vert_r &  \leq \tau^2  \varepsilon^{2}  c\left(\sup_{t_n\leq t\leq t_{n+1}} \Vert u(t) \Vert_{r+2}\right)  + c_L \tau^{2} \varepsilon^{3-\frac{\beta}2} c\left(\sup_{t_n\leq t\leq t_{n+1}} \Vert u(t) \Vert_{r+6- \beta }\right).\end{align*}
\end{lemma}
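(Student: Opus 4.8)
The plan is to estimate the local error by writing both the exact and the numerical one-step maps through Duhamel-type representations and then isolating the two independent sources of error. First I would fix $v:=u(t_n)$ and use Duhamel's formula over $[t_n,t_n+\tau]$ to write the exact flow as
\[
\varphi^\tau(v)=e^{-\tau\Le}v-\varepsilon\LeQ e^{-\tau\Le}\int_0^\tau e^{s\Le}u^2(t_n+s)\dd s,
\]
while, by the oscillatory-integral identity \eqref{resi} that defines the scheme (recall $\alpha=1$), the numerical flow is \emph{exactly}
\[
\Phi^\tau(v)=e^{-\tau\Le}\Big[v-\varepsilon\LeQ\int_0^\tau e^{s\mathcal{A}}\big(e^{-s\mathcal{A}}v\big)^2\dd s\Big],\qquad \mathcal{A}:=\partial_x+\varepsilon\partial_x^3 .
\]
Subtracting, the local error equals $-e^{-\tau\Le}\varepsilon\LeQ\int_0^\tau\big[e^{s\Le}u^2(t_n+s)-e^{s\mathcal{A}}(e^{-s\mathcal{A}}v)^2\big]\dd s$, which I would split into the \emph{iteration error} $(\mathrm{I})=e^{s\Le}\big[u^2(t_n+s)-(e^{-s\Le}v)^2\big]$ coming from freezing $u(t_n+s)\approx e^{-s\Le}v$, and the \emph{long-wave error} $(\mathrm{II})=e^{s\Le}(e^{-s\Le}v)^2-e^{s\mathcal{A}}(e^{-s\mathcal{A}}v)^2$ coming from replacing $\Le$ by its truncation $\mathcal{A}$. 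Since $m_L(i\xi),m_Q(i\xi)\in\mathbb{R}$, the operators $\Le,\mathcal{A},\DL=\Le-\mathcal{A}$ are skew-adjoint, so $e^{\pm s\Le},e^{\pm s\mathcal{A}}$ are isometries on every $H^\rho(\mathbb{T})$; this is used repeatedly.

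For $(\mathrm{I})$ I would insert $u(t_n+s)=e^{-s\Le}v+R_1(s)$ with $R_1(s)=-\varepsilon\LeQ e^{-s\Le}\int_0^s e^{\sigma\Le}u^2(t_n+\sigma)\dd\sigma$, so that $u^2(t_n+s)-(e^{-s\Le}v)^2=2(e^{-s\Le}v)R_1(s)+R_1(s)^2$. Using the crude bound $\|\varepsilon\LeQ f\|_\rho\le\varepsilon\|f\|_{\rho+1}$ (the case $\beta_Q=0$ of Lemma~\ref{lemma_reg}) together with the bilinear estimate \eqref{bil_est} and the isometries gives $\|R_1(s)\|_{r+1}\lesssim\varepsilon s\,c(\sup\|u\|_{r+2})$; one further application of the outer $\varepsilon\LeQ$ and of \eqref{bil_est} then bounds the contribution of $(\mathrm{I})$ by $\tau^2\varepsilon^2 c(\sup\|u\|_{r+2})$, the quadratic remainder $R_1^2$ being of higher order $\tau^3\varepsilon^3$.

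For $(\mathrm{II})$ I would interpolate the dispersion by setting $\mathcal{L}_\theta=\mathcal{A}+\theta\DL$ and $H(s,\theta)=e^{s\mathcal{L}_\theta}(e^{-s\mathcal{L}_\theta}v)^2$, so that $(\mathrm{II})=\int_0^1\partial_\theta H\dd\theta$. All operators are Fourier multipliers and hence commute, giving $\partial_\theta e^{s\mathcal{L}_\theta}=s\,\DL e^{s\mathcal{L}_\theta}$ and
\[
\partial_\theta H=s\,\DL e^{s\mathcal{L}_\theta}(e^{-s\mathcal{L}_\theta}v)^2-2s\,e^{s\mathcal{L}_\theta}\big[(e^{-s\mathcal{L}_\theta}v)(\DL e^{-s\mathcal{L}_\theta}v)\big],
\]
each summand carrying an explicit factor $\DL$ and a factor $s$. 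In the first summand $\DL$ and the outer $\varepsilon\LeQ$ act on the same function, so I would estimate the composite multiplier $\varepsilon\LeQ\DL$ at symbol level: Taylor expansion of $m_L$ at $0$ (odd derivatives vanish) with the decay $|m_L^{(4)}(i\xi)|\le c_L/(1+|\xi|^{\beta_L})$ gives $|m_L(i\xi)-1+\xi^2|\le c_L\xi^4/(1+|\xi|^{\beta_L})$ via the integral remainder, whence the symbol of $\varepsilon\LeQ\DL$ is bounded by $c_L\varepsilon^3|k|^6/\big[(1+(\sqrt\varepsilon k)^{\beta_Q})(1+(\sqrt\varepsilon k)^{\beta_L})\big]$; bounding the denominator below by $1+(\sqrt\varepsilon k)^{\beta_L+\beta_Q}$ and distinguishing $\sqrt\varepsilon|k|\lessgtr1$ shows this is $\le c_L\varepsilon^{3-\beta/2}(1+|k|)^{6-\beta}$ with $\beta=\min(2,\beta_L+\beta_Q)$. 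In the second summand $\DL$ sits on a single factor while $\varepsilon\LeQ$ acts on the product, so I would split the smoothing budget, using $\|\varepsilon\LeQ g\|_r\le\varepsilon^{1-a/2}\|g\|_{r+1-a}$ for $0\le a\le\beta_Q$ and $\|\DL w\|_\rho\le c_L\varepsilon^{2-b/2}\|w\|_{\rho+5-b}$ for $0\le b\le\beta_L$ (both proved at symbol level as above), choosing $a+b=\beta$, and closing with \eqref{bil_est}. Integrating in $s$ and $\theta$ and invoking the isometries yields the claimed $c_L\tau^2\varepsilon^{3-\beta/2}c(\sup\|u\|_{r+6-\beta})$.

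The main obstacle is precisely $(\mathrm{II})$: converting the $\varepsilon^2$-smallness of $\DL$ and the $\varepsilon$-smallness of $\varepsilon\LeQ$ into the sharp trade-off $\varepsilon^{3-\beta/2}$ against a loss of exactly $6-\beta$ derivatives, with $\beta=\min(2,\beta_L+\beta_Q)$. The delicate points are the composite symbol estimate for $\varepsilon\LeQ\DL$ (where the product of the two regularising denominators must be merged into a single $\beta_L+\beta_Q$ gain, capped at $2$ by the quartic Taylor remainder, and where the rescaled argument of $m_L^{(4)}$ in the remainder must be handled by splitting low and high frequencies), the correct allocation $a+b=\beta$ of the smoothing between the two operators in the second summand, and ensuring that the index $r+1-a$ at which \eqref{bil_est} is applied stays admissible; this is where the hypothesis $r>1/2$ and the assumed smoothness $u\in H^{r+6-\beta}$ enter.
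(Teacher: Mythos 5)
Your proposal is correct and follows essentially the same route as the paper: iterate Duhamel's formula to isolate the iteration error $\mathcal{R}_1=\mathcal{O}(\tau^2\varepsilon^2)$ from freezing $u(t_n+s)\approx e^{-s\Le}u(t_n)$, use the Taylor expansion of $m_L$ (odd derivatives vanishing, quartic remainder controlled by $c_L/(1+|\xi|^{\beta_L})$) to quantify $\DL$, and use the identity \eqref{resi} to recognize the remaining oscillatory integral as exactly the numerical flow, so that the second remainder is the composite-symbol term of size $c_L\tau^2\varepsilon^{3-\beta/2}$ with a loss of $6-\beta$ derivatives. Your treatment of the long-wave error via the interpolation $\mathcal{L}_\theta=\mathcal{A}+\theta\DL$ and the explicit splitting $a+b=\beta$ of the smoothing budget when $\DL$ falls on a single factor is in fact more careful than the paper's $\mathcal{O}$-level bookkeeping for $\mathcal{R}_2$, and correctly flags the only delicate admissibility issue (the Sobolev index at which the bilinear estimate is applied).
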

\begin{proof}
Iterating Duhamel's formula  of  \eqref{BBM} yields that
\begin{equation}\label{expu1}
\begin{aligned}
u(t_n+\tau) &   = e^{- \tau \Le} u(t_n)
- \varepsilon \LeQ e^{- \tau \Le}   \int_0^\tau e^{s \Le} u^2(t_n+s)ds\\
& =  e^{- \tau \Le} u(t_n)
\\&- \varepsilon \LeQ e^{- \tau \Le}   \int_0^\tau e^{s \Le} \left( e^{- s \Le}  u(t_n)\right)^2 ds\\& + \mathcal{R}_1(\varepsilon, \tau, u )
\end{aligned}
\end{equation}
with the remainder 
\begin{equation}
\mathcal{R}_1(\varepsilon, \tau, u )
 =  \varepsilon \LeQ e^{- \tau \Le}   \int_0^\tau e^{s \Le} \Big[ \left( e^{- s \Le}  u(t_n)\right)^2- u^2(t_n+s)\Big] ds.
\end{equation}
Thanks to the observation that
$$
u(t_n+s)    = e^{- s\Le} u(t_n)
- \varepsilon \LeQ e^{- s \Le}   \int_0^s e^{s_1 \Le} u^2(t_n+s_1)ds_1
$$
the remainder $\mathcal{R}_1(\varepsilon, \tau, u )$ is of the following form
$$
\mathcal{R}_1(\varepsilon, \tau, u ) = \mathcal{O}\left(\tau^2  \varepsilon \LeQ \left(u(t)  \varepsilon \LeQ \left( u^2(t)\right)  \right)  \right).
$$
Thanks to assumption \eqref{ass1}  (which guarantees the boundedness of the symbol $m_Q$) we can thus conclude that
\begin{equation}\label{R1}
\Vert \mathcal{R}_1(\varepsilon, \tau, u )
 \Vert_r \leq  \tau^2 \varepsilon^{2 }c\left(\sup_{t_n\leq t\leq t_{n+1}} \Vert u(t) \Vert_{r+2}\right).
\end{equation}
Taylor series expansion of the symbol $m_L(\delta)$ around $\delta = 0$ yields that
\begin{align*}
m_L(\delta)  & =m_L(0)  +   \xi m_L'(0) +    \frac{\delta^2}{2} m_L^{''}(0) +  \frac{\delta^3}{3!} m_L^{(3)}(0) + \int_0^\delta \frac{(\delta-\tilde \delta)^3}{3!} m_L^{(4)}(\tilde \delta) d \tilde \delta \\
 & =  1  +    {\delta^2}  m_L^{''}(0) + \int_0^\delta\frac{(\delta-\tilde \delta)^3}{3!} m_L^{(4)}(\tilde \delta) d \tilde \delta\\
\end{align*}
where in the last step we have used  the assumptions \eqref{ass1} (which implies that $m^{(2\ell +1)}(0)=0$) and the assumption that (without loss of generality) $m_L^{(2)}(0) =2$. Together with the assumption that
$ \left \vert m_L^{(4)}(i\xi)\right\vert \leq \frac{c_L}{1+\vert \xi\vert^{\beta_L}}$ (see again  \eqref{ass1})
we thus find that
 \begin{align}\label{opexp}
\DL = \Le - (\partial_x + \varepsilon \partial_x^3) 
= \mathcal{O}\left( c_L\frac{\varepsilon^{2 } \partial_x^{5 }}{1+ (\sqrt{\varepsilon}\vert \partial_x\vert)^{\beta_L}}   \right).
\end{align}
This  allows the following expansion of the oscillations
\begin{equation}\label{osc}
e^{ \pm s  \Le}  = 
e^{  \pm s   (\partial_x + \varepsilon \partial_x^3)} +\mathcal{O}\left( c_L\frac{\varepsilon^{2 } \partial_x^{5 }}{1+ (\sqrt{\varepsilon}\vert \partial_x\vert)^{\beta_L}}   \right).
\end{equation}
Employing these expansion for the oscillations  in the remaining integral term in  \eqref{expu1} 
 yields together with Lemma \ref{lemma_reg} that
\begin{equation}\label{expu2}
\begin{aligned}
u(t_n+\tau)   
& =  e^{- \tau \Le} u(t_n)
- \varepsilon \LeQ e^{- \tau \Le}   \int_0^\tau e^{ s  (\partial_x + \varepsilon \partial_x^3)} \left( e^{- s (\partial_x + \varepsilon \partial_x^3)}  u(t_n)\right)^2 ds \\&+\mathcal{R}_2(\varepsilon, \tau, u )
\end{aligned}
\end{equation}
where the remainder $\mathcal{R}_2 (\varepsilon, \tau, u ) $ is thanks to \eqref{ass1} of type
$$  \mathcal{O}\left(c_L \tau^{2}  \frac{\varepsilon^{2 } \partial_x^{5 }}{1+ (\sqrt{\varepsilon}\vert \partial_x\vert)^{\beta_L}}  \varepsilon \LeQ u^2 \right) =   \mathcal{O}\left(c_L \tau^{2}  \frac{\varepsilon^{2 } \partial_x^{5 }}{1+ (\sqrt{\varepsilon}\vert \partial_x\vert)^{\beta_L}}  \frac{\varepsilon  \partial_x }{1+ (\sqrt{\varepsilon}\vert \partial_x\vert)^{\beta_Q}} u^2 \right)
$$
such that
\begin{equation}\label{R2}
\Vert \mathcal{R}_2(\varepsilon, \tau, u )
 \Vert_r \leq  c_L \tau^{2} \varepsilon^{3-\frac{\beta}2} c\left(\sup_{t_n\leq t\leq t_{n+1}} \Vert u(t) \Vert_{r+6- \beta }\right), \quad \beta :=min(2, \beta_L + \beta_Q).
\end{equation}
Next we calculate with the aid of the Fourier transform $v(x) = \sum_{k \in \mathbb{Z}} \hat v_k e^{i k x}$ and the definition $$\partial_x^{-1} v(x) = \sum_{k\neq 0} \hat v_k e^{i k x}$$  that
\begin{equation}
\begin{aligned}\label{resi}
\mathcal{I}(\tau,\varepsilon, v) &= \varepsilon \partial_x \int_0^\tau \mathrm{e}^{ s  \varepsilon \partial_x^3  }   \left[ \mathrm{e}^{ -s  \varepsilon \partial_x^3 }   v^2\right]  ds\\
& = \varepsilon \sum_{\substack{\ell+m = k\\ \ell, m \neq 0}}e^{ik x} \hat v_\ell \hat v_m (ik)  \int_0^\tau e^{- 3i s \varepsilon k \ell m } ds + 2 \varepsilon \tau  \hat{v}_0 \partial_x v\\
& =  \sum_{\substack{\ell+m = k\\ \ell, m \neq 0}}e^{ik x} \hat v_\ell \hat v_m (ik)  \left( e^{- 3i \tau  \varepsilon k \ell m } -1 \right)
\frac{1}{3(i\ell)(im)} + 2 \varepsilon \tau  \hat{v}_0 \partial_x v\\
& = \frac{1}{3}   \mathrm{e}^{ \tau   \varepsilon \partial_x^3}   \left[ \mathrm{e}^{ -\tau  \varepsilon \partial_x^3  }  (\partial_x^{-1} v)^2\right]   -\frac13 (\partial_x^{-1} v)^2+ 2 \varepsilon \tau  \hat{v}_0 \partial_x v,
\end{aligned}
\end{equation}
see also \cite{HoS16} in case of no advection term $\partial_x$. Plugging the above relation into \eqref{expu2} we obtain
\begin{align*}
 \varphi^\tau(u(t_n))= \Phi^\tau(u(t_n)) +  \sum_{i=1,2}\mathcal{R}_i(\varepsilon, \tau, u ),
 \end{align*}
where $\mathcal{R}_1$ satisfies \eqref{R1} and $\mathcal{R}_2$ satisfies \eqref{R2}. This concludes the proof.
\end{proof}
\subsubsection{Stability analysis}
In order to carry out the stability analysis we need the following Lemma.
\begin{lemma} \label{lem:lemi}
For $\vert m_Q(i\xi) \vert \leq 1$ and $\vert m_Q'(i\xi) \vert \leq \frac{1}{1+\vert \xi \vert}$ it holds that
$$
 \Vert  [m_Q(\sqrt{\varepsilon}\partial_x),w]   \partial_x \Lr v \Vert_{L^2}  \leq \Vert w \Vert_{r+1} \Vert v \Vert_{H^r}.
$$
\end{lemma}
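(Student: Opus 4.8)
The plan is to pass to the Fourier side, where the commutator turns into a multiplier-difference kernel, and then reduce the whole estimate to a single symbol bound followed by a convolution argument. Writing $a_k := m_Q(i\sqrt{\varepsilon}k)$ for the symbol of $m_Q(\sqrt{\varepsilon}\partial_x)$ and setting $F := [m_Q(\sqrt{\varepsilon}\partial_x),w]\,\partial_x\Lr v$, a direct computation of the Fourier coefficients gives
\begin{equation*}
\hat F_k = \sum_{\ell\in\mathbb{Z}} \hat w_{k-\ell}\,(a_k-a_\ell)\,(i\ell)\,(1+\ell^2)^{r/2}\,\hat v_\ell ,
\end{equation*}
so that the entire statement reduces to controlling the symbol factor $(a_k-a_\ell)\,\ell$ pointwise, after which a Young-type convolution estimate closes the bound. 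Note that the term $\ell=0$ drops out.

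The key step, and the main obstacle, is the estimate
\begin{equation*}
|(a_k-a_\ell)\,\ell|\le C\,(1+|k-\ell|),\qquad k,\ell\in\mathbb{Z},
\end{equation*}
with $C$ \emph{independent of $\varepsilon$}. Using the evenness $m_Q(i\xi)=m_Q(-i\xi)$ from \eqref{ass1} we have $a_k=a_{|k|}$, so one may assume $k,\ell\ge 0$ and replace $|k-\ell|$ by $\bigl||k|-|\ell|\bigr|\le|k-\ell|$. I would then split into two regimes. When the frequencies are far apart, I use the crude bound $|a_k-a_\ell|\le|m_Q(i\sqrt{\varepsilon}k)|+|m_Q(i\sqrt{\varepsilon}\ell)|\le 2$ together with the elementary fact that $\ell\lesssim|k-\ell|$ in this regime. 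When they are close, I exploit the regularity bound $|m_Q'(i\xi)|\le(1+|\xi|)^{-1}$: writing $a_k-a_\ell=\int_{\ell}^{k} i\sqrt{\varepsilon}\,m_Q'(i\sqrt{\varepsilon}s)\,ds$ and bounding the integrand by $\sqrt{\varepsilon}/(1+\sqrt{\varepsilon}|s|)$ gives $|a_k-a_\ell|\le \sqrt{\varepsilon}|k-\ell|/(1+\sqrt{\varepsilon}\min(k,\ell))$, so that $|(a_k-a_\ell)\,\ell|\le |k-\ell|\,\cdot\,\sqrt{\varepsilon}\ell/(1+\sqrt{\varepsilon}\min(k,\ell))$, and in the close regime $\min(k,\ell)\gtrsim\ell$ forces the prefactor to be $\le 1$. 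The delicate point is precisely that the factor multiplying the symbol difference is $\ell$ rather than $\min(k,\ell)$, so when $\ell$ is the \emph{larger} frequency the smooth estimate alone fails and must be balanced against the crude one (say at the threshold $\ell\approx 2|k-\ell|$); both the boundedness and the derivative decay of $m_Q$ are genuinely needed, and everything must stay uniform in $\varepsilon$.

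Granting this symbol bound, I finish by convolution. Setting $b_j:=C(1+|j|)\,|\hat w_j|$ and $c_\ell:=(1+\ell^2)^{r/2}|\hat v_\ell|$, the identity above yields the pointwise domination $|\hat F_k|\le (b*c)_k$, whence Young's inequality gives $\|F\|_{L^2}\le\|b\|_{\ell^1}\|c\|_{\ell^2}$. Here $\|c\|_{\ell^2}=\|v\|_r$, while Cauchy--Schwarz produces
\begin{equation*}
\|b\|_{\ell^1}=C\sum_{j}(1+|j|)^{-r}(1+|j|)^{r+1}|\hat w_j|\le C\Big(\sum_{j}(1+|j|)^{-2r}\Big)^{1/2}\|w\|_{r+1},
\end{equation*}
where the weight sum is finite exactly because $r>1/2$. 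This gives $\|F\|_{L^2}\le C_r\|w\|_{r+1}\|v\|_r$, i.e.\ the asserted inequality up to the harmless constant $C_r$.
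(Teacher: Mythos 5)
Your proposal is correct and follows essentially the same route as the paper: pass to Fourier coefficients, split at the threshold $|\ell|\approx 2|k-\ell|$ to combine the crude bound $|m_Q|\le 1$ (far regime) with the mean-value estimate from $|m_Q'(i\xi)|\le (1+|\xi|)^{-1}$ (close regime) to get the uniform-in-$\varepsilon$ symbol bound $|(a_k-a_\ell)\ell|\lesssim |k-\ell|$, then close with Young's convolution inequality and Cauchy--Schwarz using $r>1/2$. The only difference is that you spell out the final $\ell^1 * \ell^2$ step, which the paper leaves implicit.
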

\begin{proof}
Recall that $\Lambda = (1+\vert \xi \vert^2)^{\frac12}$. The $k$-th Fourier coefficient of $ [m_Q,w]   \partial_x \Lr v $ is given by
\begin{align}\label{KF}
\reallywidehat{[m_Q(\sqrt{\varepsilon}\partial_x),w]   \partial_x \Lr v}(k) &= \sum_{l} \hat{w}(k-l)  \Big[m_Q(\sqrt{\varepsilon}i l) - m_Q(\sqrt{\varepsilon} ik) \Big]  i l (1+l^2)^\frac{r}{2} \hat{v}(l).
\end{align}
Newt we note that as $\vert m_Q'(i\xi) \vert \leq \frac{1}{1+\vert \xi \vert}$ we have\\
(i) if $\vert l \vert \leq 2 \vert k - l\vert$ that $\vert m_Q(\sqrt{\varepsilon}i l) - m_Q(\sqrt{\varepsilon} ik) \vert \leq 2$\\
(ii) if $\vert l \vert > 2 \vert k - l\vert$ that 
$$
\vert m_Q(\sqrt{\varepsilon} i l) - m_Q(\sqrt{\varepsilon}i k) \vert  \leq \sqrt{\varepsilon} \vert k - l \vert \int_0^1 \frac{1}{ 1 + \sqrt{\varepsilon}  \vert l + s (k-l)\vert} ds \leq \frac{\sqrt{\varepsilon} \vert k - l\vert}{ 1+ \sqrt{\varepsilon}\vert l\vert} \leq \frac{\vert k - l\vert}{\vert l \vert}.
$$
Hence we can conclude that
$$
\vert m_Q(\sqrt{\varepsilon} i l) - m_Q(\sqrt{\varepsilon}i  k)\vert  \vert l \vert \leq \vert k - l \vert.
$$
Plugging the above estimate into \eqref{KF} we obtain that
\begin{align*}
\left \vert \reallywidehat{[m_Q(\sqrt{\varepsilon}\partial_x),w]   \partial_x \Lr v}(k)\right \vert \leq  \sum_{l}  \vert k - l \vert \vert \hat{w}(k-l)  \vert \vert (1+l^2)^\frac{r}{2}\vert \vert \hat{v}(l)\vert
\end{align*}
which implies the assertion.

\end{proof}
\begin{lemma} \label{thm:stab} Fix  $r\geq 1$. The numerical flow defined by the scheme \eqref{scheme} is ${\varepsilon}$-stable in $H^r$ in the sense that for two functions $w\in H^{r+1}$ and $v\in H^r$  we have that 
$$
\Vert \Phi^\tau(w)- \Phi^\tau(v) \Vert_r  \leq  e^{\tau {\varepsilon} B} \Vert w-v\Vert_r, \qquad B = B(\Vert w\Vert_{r+1}, \Vert v \Vert_r),
$$
where the constant $B$ depends on the $H^{r+1}$ norm of $w$ and  $H^r$ norm of $v$.
\end{lemma}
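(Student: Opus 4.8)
The plan is to exploit that the linear propagator $e^{-\tau\Le}$ is an isometry on every $H^r$, so that the whole estimate reduces to the bracketed nonlinear term, and then to run a quasilinear energy estimate in which the loss of one derivative in the nonlinearity is absorbed through the commutator structure of Lemma~\ref{lem:lemi}. First I would record that, by \eqref{ass1}, the symbol $ik\,m_L(\sqrt{\varepsilon}ik)$ of $\Le$ is purely imaginary (because $m_L(i\xi)$ is real and even), hence $e^{-\tau\Le}$ has a modulus-one Fourier multiplier and $\Vert e^{-\tau\Le}f\Vert_r=\Vert f\Vert_r$. Writing $d:=w-v$ and denoting by $\mathcal N(u)$ the subtracted nonlinear term of \eqref{scheme}, this gives
\[
\Vert \Phi^\tau(w)-\Phi^\tau(v)\Vert_r=\big\Vert\, d-\big(\mathcal N(w)-\mathcal N(v)\big)\big\Vert_r .
\]

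Next I would use the integral representation behind \eqref{resi}: up to its explicit zero-mode term, $\mathcal N(u)=m_Q(\sqrt{\varepsilon}\partial_x)\,\varepsilon\partial_x\int_0^\tau e^{s\varepsilon\partial_x^3}\big[e^{-s\varepsilon\partial_x^3}u^2\big]\,ds$. Combining this with the polarisation $w^2-v^2=(w+v)d$ yields the clean difference
\[
\mathcal N(w)-\mathcal N(v)=m_Q(\sqrt{\varepsilon}\partial_x)\,\varepsilon\partial_x\int_0^\tau e^{s\varepsilon\partial_x^3}\Big[e^{-s\varepsilon\partial_x^3}\big((w+v)d\big)\Big]\,ds ,
\]
which already exhibits the explicit factor $\varepsilon$ and one factor $\tau$ from the time integral, and shows that the only dangerous object is the single derivative $\partial_x$ acting on the product $(w+v)d$ (the oscillatory groups $e^{\pm s\varepsilon\partial_x^3}$ are unitary on $H^r$ and commute with all the Fourier multipliers involved).

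The heart of the argument is an energy estimate. Squaring and using the isometry,
\[
\Vert \Phi^\tau(w)-\Phi^\tau(v)\Vert_r^2=\Vert d\Vert_r^2-2\big(\Lr d,\Lr(\mathcal N(w)-\mathcal N(v))\big)+\big\Vert\mathcal N(w)-\mathcal N(v)\big\Vert_r^2 .
\]
The quadratic last term is harmless: using $\vert m_Q\vert\le1$, the bilinear estimate \eqref{bil_est} and unitarity of $e^{\pm s\varepsilon\partial_x^3}$ it is $\mathcal O(\tau^2\varepsilon)\Vert w+v\Vert_r^2\Vert d\Vert_r^2$, absorbed into $e^{\tau\varepsilon B}$. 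The cross term is the crux. Here I would integrate by parts to move $\partial_x$ off the product, and use that $m_Q(\sqrt{\varepsilon}\partial_x)\partial_x$ is a \emph{real skew-adjoint} operator (since $m_Q(i\xi)$ is real and even). The leading transport contribution, of the form $\big(\Lr d,(w+v)\,m_Q(\sqrt{\varepsilon}\partial_x)\partial_x\Lr d\big)$, is then symmetrised: its top-order part cancels, leaving a commutator $[m_Q(\sqrt{\varepsilon}\partial_x),w+v]\partial_x\Lr d$, which is exactly the object bounded in Lemma~\ref{lem:lemi} by $\Vert w+v\Vert_{r+1}\Vert d\Vert_r$, together with a lower-order piece controlled by $\Vert\partial_x(w+v)\Vert_\infty\Vert d\Vert_r^2$. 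This bounds the cross term by $C\tau\varepsilon\,\Vert d\Vert_r^2$ with a constant depending on $\Vert w\Vert_{r+1}$ and $\Vert v\Vert_r$, and the claim follows from $1+2\tau\varepsilon B\le e^{2\tau\varepsilon B}$.

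The main obstacle is precisely this loss of one derivative in $(w+v)\partial_x d$. A naive norm bound would only trade $\partial_x$ against the smoothing of $\varepsilon m_Q(\sqrt{\varepsilon}\partial_x)$ (cf.\ Lemma~\ref{lemma_reg}) and thereby lose a factor $\sqrt{\varepsilon}$ instead of the full $\varepsilon$, which is insufficient over times $t_n\le 1/\varepsilon$. Retaining the full $\varepsilon$ forces the energy/commutator treatment above, which is exactly why the stability constant $B$ must be allowed to depend on the stronger $H^{r+1}$ norm of $w$, while the rough argument $v$ enters only through its $H^r$ norm; the extra derivative is always thrown onto the smoother factor, and the zero-mode contributions are handled separately by the same skew-adjointness together with Lemma~\ref{lem:lemi}.
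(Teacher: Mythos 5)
Your argument is essentially the paper's own proof: both pass to the integral form of the scheme via \eqref{resi}, use that $e^{-\tau\Le}$ and $e^{\pm s\varepsilon\partial_x^3}$ are $H^r$-isometries to reduce everything to a single bilinear energy estimate, and then remove the derivative loss by symmetrising with the self-adjointness of $m_Q(\sqrt{\varepsilon}\partial_x)$ and invoking the commutator bound of Lemma~\ref{lem:lemi}, with the extra derivative always thrown onto the smoother factor. The only difference is presentational: you polarise $w^2-v^2=(w+v)(w-v)$ and square the norm explicitly, whereas the paper states the equivalent bilinear estimate \eqref{todo} directly and verifies it term by term.
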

\begin{proof} Fix $r\geq 1$.  For the stability analysis we will rewrite the numerical flow back  \eqref{scheme}  in its integral form. Tanks to \eqref{resi} we observe that
$$
 \Phi^\tau(v) =\mathrm{e}^{-\tau \Le }  v   -  \varepsilon\LeQ \mathrm{e}^{-\tau \Le } \int_0^\tau \mathrm{e}^{ s   \varepsilon \partial_x^3 }   \left[ \mathrm{e}^{ -s   \varepsilon \partial_x^3}   v^2\right] ds.
$$
We need to show that
\begin{align}\label{todo}
\left \vert \left( \Lr\dm (w v), \Lr v \right)\right\vert \leq \Vert w \Vert_{r+1} \Vert v \Vert^2_r
\end{align}
where for shortness we write $m_Q =m_Q(\sqrt{\varepsilon}\partial_x)$.

Let us note that
$$
 \Lr\dm (w v) = \Lr m_Q (w \partial_x v) + \Lr m_Q (v \partial_x w),
$$
where thanks to the boundedness of $m_Q$ (see \eqref{ass1}) we have that
$$
\left \vert \left( \Lr m_Q (v \partial_x w),  \Lr v \right)\right\vert \leq
\Vert v \Vert_r^2 \Vert  v \partial_x w  \Vert_{r}
\leq  \Vert v \Vert_r^2 \Vert w \Vert_{r+1}.
$$
Thus we obtain that
\begin{align}\label{todo2}
\left \vert \left( \Lr\dm (w v), \Lr v \right)\right\vert  \leq \left \vert \left( \Lr m_Q (w \partial_x v), \Lr v \right)\right\vert  +  \Vert v \Vert_r^2 \Vert w \Vert_{r+1}
\end{align}
and it remains to derive a suitable bound on $\left \vert \left( \Lr m_Q (w \partial_x v), \Lr v \right)\right\vert $. For this purpose let us note that
\begin{equation}\label{K1}
\Lr m_Q (w \partial_x v) = m_Q( w \Lr \partial_x v) + m_Q ( [\Lr, w] \partial_x v).
\end{equation}
For the second term in \eqref{K1} we see that
\begin{equation}\label{K12}
\begin{aligned}
\left \vert \left( m_Q ( [\Lr, w] \partial_x v),   \Lr v \right)\right\vert\leq \Vert [\Lr, w] \partial_x v\Vert_{L^2} \Vert v \Vert_r  \leq \left( \Vert \partial_x v\Vert_{L^2} \Vert w \Vert_{r+1} + \Vert w \Vert_{L^\infty} \Vert v \Vert_r\right) \Vert v\Vert_r.
\end{aligned}
\end{equation}
For first term in \eqref{K1} we see that as $m_Q = m_Q^\ast$
\begin{equation*}\label{K11}
\begin{aligned}
 \left( m_Q( w \Lr \partial_x v),   \Lr v \right) & =  \left(  w \Lr \partial_x v, m_Q  \Lr v \right)  = -  \left(  (\partial_x w) \Lr  v, m_Q  \Lr v \right) - \left(  w \Lr  v, m_Q   \partial_x \Lr v \right)\\
 & =   -  \left(  (\partial_x w) \Lr  v, m_Q  \Lr v \right) - \left(   \Lr  v, m_Q (w   \partial_x \Lr v) \right)
 -  \left(   \Lr  v, [m_Q,w]   \partial_x \Lr v \right).
 \end{aligned}
\end{equation*}
Hence,
$$
 \left( m_Q( w \Lr \partial_x v),   \Lr v \right)  = - \frac12 \left(  (\partial_x w) \Lr  v, m_Q  \Lr v \right) 
 - \frac12   \left(   \Lr  v, [m_Q,w]   \partial_x \Lr v \right)
$$
which implies thanks to Lemma \ref{lem:lemi} and the assumptions \eqref{ass1} on $m_Q$ that
\begin{equation}\label{K11}
\begin{aligned}
\left \vert \left( m_Q( w \Lr \partial_x v),   \Lr v \right)  \right\vert \leq
\Vert \partial_x w\Vert_{L^\infty} \Vert v \Vert_r^2 + \Vert v \Vert_r \Vert  [m_Q,w]   \partial_x \Lr v \Vert_{L^2}  \leq c  \Vert v \Vert_r^2 \Vert w \Vert_{r+1}.
\end{aligned}
\end{equation}
Plugging \eqref{K12} and \eqref{K11} into \eqref{K1} wields that
$$
\left \vert \left( \Lr m_Q (w \partial_x v), \Lr v \right)\right\vert  \leq c  \Vert v \Vert_r^2 \Vert w \Vert_{r+1}
$$
which by \eqref{todo2} implies the desired esitmate \eqref{todo}.

\end{proof}

\subsubsection{Global error estimate} 
\begin{theorem} \label{thm:glob1} Fix $ \beta :=min(2, \beta_L + \beta_Q)$ and assume that the solution $u$ of \eqref{BBM} satisfies $u\in \mathcal{C}([0,T];H^{6- \beta})$. Then there exists a $\tau_0>0$ such that for all $0<\tau \leq \tau_0$  the global error estimate holds for  $u^n$ defined in \eqref{scheme}
\begin{align*}
\Vert u(t_n)- u^n \Vert_{L^2} &  \leq   t_n   \tau  \varepsilon^{2}  c\left(\sup_{0\leq t\leq t_{n}} \Vert u(t) \Vert_{2}\right)  e^{c t_n  {\varepsilon}}+ c_L   t_n     \tau \varepsilon^2 \varepsilon^{1-\frac{\beta}2} c\left(\sup_{0\leq t\leq t_{n}} \Vert u(t) \Vert_{6- \beta }\right)   e^{c t_n  {\varepsilon}}
\end{align*}
where $c$ depends on the $H^{2}$ norm of the solution $u$.
\end{theorem}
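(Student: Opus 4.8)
The plan is to run a standard Lady Windermere's fan (one-step error telescoping) argument, feeding the local error bound of Lemma~\ref{thm:loc} through the $\varepsilon$-stability of the numerical flow from Lemma~\ref{thm:stab}. Writing $e^n = u(t_n) - u^n$ and recalling $u(t_{n+1}) = \varphi^\tau(u(t_n))$, $u^{n+1} = \Phi^\tau(u^n)$, I would decompose the propagated error as
\begin{align*}
e^{n+1} = \varphi^\tau(u(t_n)) - \Phi^\tau(u^n) = \big[\varphi^\tau(u(t_n)) - \Phi^\tau(u(t_n))\big] + \big[\Phi^\tau(u(t_n)) - \Phi^\tau(u^n)\big].
\end{align*}
The first bracket is exactly the local error controlled by Lemma~\ref{thm:loc} applied with $r=0$, which is precisely why only $\Vert u\Vert_2$ and $\Vert u\Vert_{6-\beta}$ enter (since $r+2=2$ and $r+6-\beta=6-\beta$); call its bound $\Vert\mathcal{L}^n\Vert_{L^2}$, i.e. the sum of \eqref{R1} and \eqref{R2} at $r=0$. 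The second bracket is governed by the stability estimate, yielding $\Vert \Phi^\tau(u(t_n)) - \Phi^\tau(u^n)\Vert_{L^2}\le e^{\tau\varepsilon B}\Vert e^n\Vert_{L^2}$. Together these give the recursion $\Vert e^{n+1}\Vert_{L^2}\le e^{\tau\varepsilon B}\Vert e^n\Vert_{L^2}+\Vert\mathcal{L}^n\Vert_{L^2}$.

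I would then resolve this recursion by discrete Gronwall. Since $u^0=u(0)$ gives $e^0=0$, summation yields $\Vert e^n\Vert_{L^2}\le\sum_{j=0}^{n-1}e^{(n-1-j)\tau\varepsilon B}\Vert\mathcal{L}^j\Vert_{L^2}\le e^{t_n\varepsilon B}\sum_{j=0}^{n-1}\Vert\mathcal{L}^j\Vert_{L^2}$. With $n=t_n/\tau$ steps, each of the $\tau^2$-sized local errors loses one power of $\tau$ and picks up the factor $t_n$, so $\sum_{j=0}^{n-1}\Vert\mathcal{L}^j\Vert_{L^2}\le t_n\tau\big[\varepsilon^2 c(\sup\Vert u\Vert_2)+c_L\varepsilon^{3-\beta/2}c(\sup\Vert u\Vert_{6-\beta})\big]$. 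Using the identity $\varepsilon^{3-\beta/2}=\varepsilon^2\varepsilon^{1-\beta/2}$ splits this into exactly the two terms of the claimed bound, and the accumulated stability factor $e^{t_n\varepsilon B}$ plays the role of $e^{ct_n\varepsilon}$. I would stress here that the $\varepsilon$ inside the exponent is the whole point: on the natural time scale $t_n\le 1/\varepsilon$ one has $t_n\varepsilon=\mathcal{O}(1)$, so the exponential stays bounded and the estimate genuinely reads as $\mathcal{O}(\tau\varepsilon)$ over long times.

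The main obstacle is that the stability constant $B=B(\Vert u(t_n)\Vert,\Vert u^n\Vert)$ depends on the numerical iterate, so I must keep $\Vert u^n\Vert$ bounded uniformly in $n$ to freeze $B$ along the trajectory. I would close this by a bootstrap induction: assuming the error bound up to step $n$, the relation $u^n=u(t_n)-e^n$ together with the smallness of $e^n$ keeps $\Vert u^n\Vert$ in a fixed ball determined by $\sup_{t}\Vert u(t)\Vert$, so $B$ can be replaced by a uniform $c=c(\sup_t\Vert u(t)\Vert_2)$; one then verifies that the bound at step $n+1$ is still small provided $\tau\le\tau_0$ and $t_n\varepsilon$ is controlled. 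A secondary technical point is that the $L^2$ (i.e.\ $r=0$) stability estimate sits just below the range $r\ge1$ of Lemma~\ref{thm:stab}: inspecting its proof, the only contribution genuinely needing one extra derivative is the transport term $(\partial_x u(t_n))\,v$, which is bounded by $\Vert\partial_x u(t_n)\Vert_{L^\infty}\Vert v\Vert_{L^2}^2\lesssim\Vert u(t_n)\Vert_{2}\Vert v\Vert_{L^2}^2$ while the commutator term is handled by Lemma~\ref{lem:lemi} at $r=0$. This is exactly why the exponential constant $c$ is allowed to depend on the $H^2$-norm of the \emph{exact} solution, and it is what makes the $L^2$ global bound consistent with the $r=0$ local estimate.
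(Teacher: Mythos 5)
Your overall skeleton (telescoping the error into local error plus propagated error, feeding Lemma~\ref{thm:loc} at $r=0$ and the stability estimate into a discrete Gronwall argument) is exactly the Lady Windermere's fan the paper invokes, and your bookkeeping of the powers of $\tau$ and $\varepsilon$ and of the norms $\Vert u\Vert_2$, $\Vert u\Vert_{6-\beta}$ is right. The gap is in how you close the bootstrap at the $L^2$ level. You propose to freeze the stability constant $B$ by noting that $u^n=u(t_n)-e^n$ and that $e^n$ is small \emph{in $L^2$}; but the $L^2$-level stability constant does not depend only on $\Vert u^n\Vert_{L^2}$. Writing $w=u(t_n)$, $v=u^n$, $d=w-v$, the quadratic difference is $w^2-v^2=wd+vd$ (or $2wd-d^2$): the term $wd$ can indeed be handled with the stronger norm on the exact solution, but the term $vd$ (equivalently $d^2$) forces, through the antisymmetrization and commutator argument of Lemma~\ref{thm:stab} applied at $r=0$, a constant involving $\Vert u^n\Vert_{H^1}$ (or at least an $L^\infty$-type control of $u^n$), not $\Vert u^n\Vert_{L^2}$. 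Your claim that ``the only contribution genuinely needing one extra derivative is the transport term $(\partial_x u(t_n))\,v$'' overlooks precisely this piece, and a bound on the $L^2$ error cannot supply the required $H^1$ bound on the iterates.

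The paper closes this loop differently: it first runs the fan argument in $H^1$ to prove an auxiliary convergence estimate with \emph{reduced} order $\tau^\delta$ but retaining a full factor $\varepsilon$, namely $\Vert u(t_n)-u^n\Vert_{H^1}\leq t_n\tau^\delta\varepsilon\, c(\sup_t\Vert u(t)\Vert_4)e^{ct_n\varepsilon}$; since $t_n\varepsilon=\mathcal O(1)$ on the relevant time scale, this yields a uniform a priori bound $\Vert u^n\Vert_{H^1}\leq \Vert u(t_n)\Vert_{H^1}+\mathcal O(\tau^\delta)$ over long times $t_n\leq 1/\varepsilon$ (this is where the hypothesis $u\in H^{6-\beta}\subset H^4$ is used). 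Only then is the $L^2$ fan argument run with the stability constant frozen by this $H^1$ bound. To repair your proof you would need to insert this intermediate higher-norm convergence step (or otherwise establish a uniform $H^1$ bound on $u^n$ over $n\tau\leq 1/\varepsilon$) before the $L^2$ Gronwall iteration; as written, the induction does not close.
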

\begin{proof}
The assertion in $H^r$, $r\geq 1$, follows by the local error bound given in Lemma \ref{thm:loc} together with the stability estimate in Lemma \ref{thm:stab} (with the stronger norm placed on the exact solution $u(t_n)$) via a Lady Windermere's fan argument (see, e.g., \cite{HLW}). Then under the given regularity assumptions on the exact solution (which imply that $u$ is at least in $H^4$) we can prove the corresponding $L^2$ error bound by first proving convergence (with reduced order in $\tau$ but full gain of at least one factor $\varepsilon$) in $H^{1}$, i.e., 
\begin{align*}
\Vert u(t_n)- u^n \Vert_{H^1} &  \leq  t_n   \tau^\delta  \varepsilon   c\left(\sup_{0\leq t\leq t_{n}} \Vert u(t) \Vert_{4}\right)  e^{c t_n  {\varepsilon}}  
\end{align*}
for some $\delta = \delta(\beta_L,\beta_Q) >0$. Thanks to the estimate
$$
\Vert  u^n \Vert_{H^1} \leq \Vert u(t_n)- u^n \Vert_{H^1} + \Vert u(t_n) \Vert_{H^1} 
$$ this  will give us a priori the boundedness of the numerical solution in $H^{1}$ over long times $t_n=\frac{1}{\varepsilon}$. For details on the latter approach in case of short time ($t=1$) estimates for splitting methods for cubic Schr\"odinger we refer to \cite{Lubich08}.
\end{proof}

\section{A second-order long wave limit preserving scheme}\label{sec:scheme2}
Our second order LWP  scheme for \eqref{BBM} takes the form 
\begin{equation}\label{schema2}
\begin{aligned}
u^{n+1} &=   \mathrm{e}^{-\tau \partial_x m_L(\sqrt{\varepsilon}\partial_x) } \left[  u^n 
 - \frac{1}{3\alpha}  m_Q(\sqrt{\varepsilon}\partial_x)  \Big(e^{\tau\alpha\varepsilon\partial_x^3}   \Big( e^{-\tau\alpha\varepsilon\partial_x^3}   \partial_x^{-1}       u^n \Big)^2 - \Big(   \partial_x^{-1}     u^n\Big)^2+  2 \varepsilon \tau \widehat{u^n_0} \partial_x u^n\Big)
 \right]  \\& +  {\tau^2} \varepsilon^2\LeQ \Psi_{m_Q} \Big(u^n   \Psi_{m_Q}\LeQ (u^n u^n)\Big)  \\&
 -  \frac{\tau^2}{2}  \varepsilon \LeQ     \Psi_{\DL,m_Q}\DL   (u^n u^n)    + {\tau^2}     \varepsilon \LeQ  \Psi_{\DL,m_Q}\left( u^n  \DL u^n   \right) 
  \end{aligned}
\end{equation}
where we recall that $ \alpha = m_L^{(2)}(0)/2 $ and 
$$
\DL(\sqrt{\varepsilon} \partial_x) = \Le - (\partial_x + \alpha \varepsilon\partial_x^3).
$$
For stability issues  we have introduced the  filter functions $$\Psi_{m_Q}(\sqrt{\varepsilon}\partial_x) \quad \text{and}\quad   \Psi_{\DL,m_Q} = \Psi_{\DL,m_Q}(\sqrt{\varepsilon}\partial_x)$$ satisfying
\begin{equation}\label{filter}
\begin{aligned}
&\left \Vert \tau \Psi_{m_Q}(\sqrt{\varepsilon}\partial_x)\LeQ v\right \Vert_r \leq \Vert v \Vert_r , \quad 
\left \Vert\Psi_{m_Q }(\sqrt{\varepsilon}\partial_x) v -  v\right \Vert_r \leq \tau \Vert \LeQ v\Vert_{r}
\\
& \left \Vert \tau \LeQ   \DL   \Psi_{\DL,m_Q}(\sqrt{\varepsilon}\partial_x)   v\right \Vert_r \leq \Vert v \Vert_r , \,
\left \Vert \Psi_{\DL,m_Q}(\sqrt{\varepsilon}\partial_x)   v -  v\right \Vert_r \leq \tau \Vert  \LeQ\DL v\Vert_{r}.
\end{aligned}
\end{equation}
For an introduction to filter functions we refer to \cite{HLW}.\\

In a first section we will derive the LWP  scheme \eqref{schema2} (see Section \ref{sec:dev2}). Then we will carry out its long time error estimate (see Section \ref{sec:err2}). We will again assume without loss of generality that $ \alpha = 1$.

\subsection{Derivation of the scheme}\label{sec:dev2}
Iterating Duhamel's formula   \eqref{BBM} yields that
\begin{equation*}
\begin{aligned}
u(t_n+\tau) &   = e^{- \tau \Le} u(t_n)
- \varepsilon \LeQ e^{- \tau \Le}   \int_0^\tau e^{s \Le} u^2(t_n+s)ds\\
& =  e^{- \tau \Le} u(t_n)
\\& - \varepsilon \LeQ e^{- \tau \Le}   \int_0^\tau e^{s \Le} \Big( e^{- s \Le}  u(t_n)\\&\qquad -  \varepsilon \LeQ e^{- s \Le}   \int_0^s e^{s_1 \Le} u^2(t_n+s_1)ds_1\Big)^2 ds.
\end{aligned}
\end{equation*}
Employing the approximation
\begin{align*}
 \varepsilon \LeQ & e^{- s \Le}   \int_0^s e^{s_1 \Le} u^2(t_n+s_1)ds_1 \\ &= 
{s} \varepsilon \LeQ     u^2(t_n)  + \mathcal{O}(s^2 \varepsilon \Le \LeQ u^2)
\end{align*}
we obtain that
\begin{equation}\label{uexp2}
\begin{aligned}
u(t_n+\tau)    
& =  e^{- \tau \Le} u(t_n)
\\&- \varepsilon \LeQ e^{- \tau \Le}   \int_0^\tau e^{s \Le} \left( e^{- s \Le}  u(t_n)\right)^2 ds\\
& + \tau^2 \varepsilon \LeQ e^{- \tau \Le}    \left( u(t_n) \varepsilon \LeQ  u^2(t_n)\right)    + \mathcal{R}_1(\tau,\varepsilon,u). \end{aligned}
\end{equation}
The remainder $\mathcal{R}_1(\tau,\varepsilon,u)$ is thereby of order $$\mathcal{O}(s^2 \varepsilon^2 \Le \LeQ  \LeQ u^2)$$ which implies by  assumption \eqref{ass1} together with the observation  (see \eqref{opexp})
 $$
\DL = \Le - (\partial_x + \varepsilon \partial_x^3) =\mathcal{O}\left( c_L\frac{\varepsilon^{2 } \partial_x^{5 }}{1+ (\sqrt{\varepsilon}\vert \partial_x\vert)^{\beta_L}}   \right)
 $$
 the following bound
\begin{align}\label{2R1}
\Vert \mathcal{R}_1(\tau,\varepsilon,u)\Vert_r \leq \tau^3 \varepsilon^2 c \left(\sup_{t_n \leq t \leq t_{n+1}}  \Vert  u\Vert_{r+5}\right)+ \tau^3 \varepsilon^2 \varepsilon^{1-\beta_0} c \left(\sup_{t_n \leq t \leq t_{n+1}}  \Vert  u\Vert_{r+7-2\beta_0}\right)
\end{align}
with $\beta_0 = \text{min}(1, \beta_0)$.

Next we employ the following lemma.
\begin{lemma}\label{lem:osc2}
It holds that 
\begin{align*}
  \varepsilon &\LeQ e^{- \tau \Le}  \int_0^\tau e^{s \Le} \left( e^{- s \Le}  v\right)^2 ds\\&  = 
\varepsilon \LeQ e^{- \tau \Le}    \int_0^\tau e^{s  (\partial_x + \varepsilon \partial_x^3) } \left(e^{-s  (\partial_x + \varepsilon \partial_x^3) }  v\right)^2 ds\\
  & +  \frac{\tau^2}{2}  \varepsilon \LeQ   \Psi_{\DL}\DL  v^2   - {\tau^2}     \varepsilon \LeQ  \Psi_{\DL}  \left( v \DL  v   \right) + \mathcal{R}_2(\tau,\varepsilon,u) 
 \end{align*}
with the remainder
 \begin{align}\label{2R2}
\Vert \mathcal{R}_2(\tau,\varepsilon,u)\Vert_r \leq c_L\tau^3 \varepsilon^2 \varepsilon^{1-\beta_1/2} c \left(\sup_{t_n \leq t \leq t_{n+1}}  \Vert  u\Vert_{r+7-\beta_1}\right) +  c_L\tau^3   \varepsilon^2  \varepsilon^{3-\beta_2/2} c \left(\sup_{t_n \leq t \leq t_{n+1}}  \Vert  u\Vert_{r+11-\beta_2}\right) 
\end{align}
where $\beta_1 = \text{min}(2,\beta_Q+\beta_L)$ and $\beta_2 = \text{min}(6,\beta_Q+2 \beta_L)$.
\end{lemma}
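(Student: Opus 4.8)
The plan is to compare the left-hand side with the first term on the right-hand side mode by mode in Fourier, isolating the leading correction produced by the mismatch between the true oscillation $e^{\pm s\Le}$ and its long wave surrogate $e^{\pm s(\partial_x+\varepsilon\partial_x^3)}$. Writing $v=\sum_k \hat v_k e^{ikx}$ and denoting by $\lambda_L(k)=ik\,m_L(\sqrt{\varepsilon}ik)$ and $\lambda_A(k)=ik-i\varepsilon k^3$ the multipliers of $\Le$ and of $\partial_x+\varepsilon\partial_x^3$, both quadratic terms $e^{s\Le}(e^{-s\Le}v)^2$ and $e^{s(\partial_x+\varepsilon\partial_x^3)}(e^{-s(\partial_x+\varepsilon\partial_x^3)}v)^2$ have $k$-th Fourier coefficient $\sum_{\ell+m=k}e^{s\Delta_\bullet}\hat v_\ell\hat v_m$ with phases $\Delta_L=\lambda_L(k)-\lambda_L(\ell)-\lambda_L(m)$ and $\Delta_A=\lambda_A(k)-\lambda_A(\ell)-\lambda_A(m)=-3i\varepsilon k\ell m$ (the linear parts cancelling since $k=\ell+m$, cf. \eqref{resi}). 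Setting $\Delta_D=\Delta_L-\Delta_A$, which is the phase attached to $\DL$ and which by \eqref{opexp} is of size $\mathcal{O}(c_L\varepsilon^2(\cdots))$, the integrand difference factorises as $e^{s\Delta_A}(e^{s\Delta_D}-1)$.

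First I would Taylor expand in the small phase, using the exact splitting $e^{s\Delta_A}(e^{s\Delta_D}-1)=s\Delta_D+(e^{s\Delta_A}-1)\,s\Delta_D+e^{s\Delta_A}(e^{s\Delta_D}-1-s\Delta_D)$. Integrating the leading piece gives $\int_0^\tau s\Delta_D\,\dd s=\tfrac{\tau^2}{2}\Delta_D$ per mode. The key algebraic step is the symmetrisation identity: since $\Delta_D=\lambda_D(k)-\lambda_D(\ell)-\lambda_D(m)$ with $\lambda_D$ the symbol of $\DL$, one checks that $\tfrac12\sum_{\ell+m=k}\Delta_D\hat v_\ell\hat v_m=\reallywidehat{\big(\tfrac12\DL v^2-v\DL v\big)}(k)$, so that $\tfrac{\tau^2}{2}\sum_{\ell+m=k}\Delta_D\hat v_\ell\hat v_m$ is exactly the $k$-th coefficient of $\tau^2 H$ with $H:=\tfrac12\DL v^2-v\DL v$. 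Premultiplying by $\varepsilon\LeQ e^{-\tau\Le}$ reproduces, up to the outer group, the two correction terms $\tfrac{\tau^2}{2}\varepsilon\LeQ\DL v^2$ and $-\tau^2\varepsilon\LeQ(v\DL v)$ in the statement.

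Next I would trade the exact outer group for the filter by writing $\varepsilon\LeQ e^{-\tau\Le}H=\varepsilon\LeQ\Psi_{\DL}H+\varepsilon\LeQ(e^{-\tau\Le}-\Psi_{\DL})H$; the first summand is the claimed contribution, and the second is a remainder controlled by $\|(e^{-\tau\Le}-\mathrm{Id})g\|_r\leq\tau\|\Le g\|_r$ together with the filter bound $\|(\Psi_{\DL}-\mathrm{Id})g\|_r\leq\tau\|\LeQ\DL g\|_r$ from \eqref{filter}, both of which gain a factor $\tau$. Collecting everything, the remainder $\mathcal{R}_2$ splits into single-$\DL$ contributions (the group–filter mismatch $\tau^2\varepsilon\LeQ(e^{-\tau\Le}-\Psi_{\DL})H$, and the $\int_0^\tau(e^{s\Delta_A}-1)s\Delta_D\,\dd s$ term, which moreover carries one extra $\varepsilon$ from $\Delta_A=\mathcal{O}(\varepsilon)$) and a double-$\DL$ contribution (the quadratic tail $\int_0^\tau e^{s\Delta_A}(e^{s\Delta_D}-1-s\Delta_D)\,\dd s$, producing two factors of $\DL$). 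Throughout, $e^{-\tau\Le}$ acts as an $H^r$-isometry because $\lambda_L$ is purely imaginary by \eqref{ass1}.

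To bound these families I would use, in each copy of $\DL$, the operator estimate \eqref{opexp} (trading $\varepsilon^{2-\beta_L/2}$ against a loss of $5-\beta_L$ derivatives, capped by the low-frequency branch $\varepsilon^2\partial_x^5$), for $\varepsilon\LeQ$ either the boundedness of $m_Q$ from \eqref{ass1} or the regularising Lemma \ref{lemma_reg}, and for the products the bilinear estimate \eqref{bil_est}. The single-$\DL$ families then yield at most $\tau^3\varepsilon^2\varepsilon^{1-\beta_1/2}$ with $\beta_1=\min(2,\beta_Q+\beta_L)$ and a loss of $7-\beta_1$ derivatives, while the double-$\DL$ family yields $\tau^3\varepsilon^2\varepsilon^{3-\beta_2/2}$ with $\beta_2=\min(6,\beta_Q+2\beta_L)$ and a loss of $11-\beta_2$ derivatives, which is precisely \eqref{2R2}. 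The main obstacle is exactly this bookkeeping: aligning the powers of $\varepsilon$ with the derivative counts through the composition of $\LeQ$, one or two copies of $\DL$, the filters, and the bilinear estimate, and in each term choosing whether to invoke the low-frequency branch ($\varepsilon^2$, maximal derivative loss) or the high-frequency branch ($\varepsilon^{2-\beta_L/2}$) of \eqref{opexp} so that the minima defining $\beta_1$ and $\beta_2$ emerge correctly.
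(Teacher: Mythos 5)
Your proposal follows the same strategy as the paper's proof: isolate the difference between the true oscillatory integral and its long--wave surrogate, Taylor--expand it to first order in $s$ to extract the corrections $\tfrac{\tau^2}{2}\DL v^2-\tau^2 v\DL v$, swap the exact propagator for the filter at an $\mathcal{O}(\tau)$ cost, and bound the remainders via \eqref{opexp}, the mapping properties of $\varepsilon\LeQ$ and the bilinear estimate \eqref{bil_est}. The only substantive difference is organisational: the paper telescopes the operator difference into an outer--propagator piece plus two inner--propagator pieces and expands each exponential around the identity, whereas you work with the combined Fourier phases $\Delta_L,\Delta_A,\Delta_D$ and expand $e^{s\Delta_D}-1$, recovering the correction through the symmetrisation identity $\sum_{\ell+m=k}\Delta_D\hat v_\ell\hat v_m=\reallywidehat{\DL v^2}(k)-2\,\reallywidehat{v\DL v}(k)$, which is correct. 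One bookkeeping caveat: in your splitting the subleading single--$\DL$ piece is $\int_0^\tau(e^{s\Delta_A}-1)\,s\Delta_D\,\dd s\sim\tau^3\Delta_A\Delta_D$, and since $\Delta_A=-3i\varepsilon k\ell m$ this costs $c_L\varepsilon^3$ with eight derivatives (nine after applying $\varepsilon\LeQ$), rather than the $c_L\varepsilon^2\partial_x^6$ (seven derivatives after $\varepsilon\LeQ$) that the paper obtains by expanding the individual propagators, whose phases are only $\mathcal{O}(\partial_x)$; so to land exactly on the $\Vert u\Vert_{r+7-\beta_1}$ loss in \eqref{2R2} you should expand $e^{\pm s\Le}$ and $e^{\pm s(\partial_x+\varepsilon\partial_x^3)}$ separately in the quadratic--in--$s$ terms, as the paper does. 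This does not threaten the global result --- the extra derivatives sit within the regularity already demanded by the $\beta_2$ term --- but as written your estimate for that one piece does not literally reproduce \eqref{2R2}.
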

\begin{proof}
Note that
\begin{align*}
 \int_0^\tau & e^{s \Le} \left( e^{- s \Le}  v\right)^2 ds
      \\&  = 
  \int_0^\tau \left( e^{s \Le}-  e^{s  (\partial_x + \varepsilon \partial_x^3) } \right) \left( e^{- s \Le}  v\right)^2 ds\\
  &+   \int_0^\tau  e^{s  (\partial_x + \varepsilon \partial_x^3) }  \Big[   \left( e^{- s \Le}  v\right)  \left( e^{-s \Le}-  e^{-s  (\partial_x + \varepsilon \partial_x^3) } \right)  v  \Big]ds\\
    &+   \int_0^\tau  e^{s  (\partial_x + \varepsilon \partial_x^3) }  \Big[  \left(  e^{-s  (\partial_x + \varepsilon \partial_x^3) }   v \right)     \left( e^{-s \Le}-  e^{-s  (\partial_x + \varepsilon \partial_x^3) } \right)    v  \Big]ds\\
    & + \int_0^\tau e^{s  (\partial_x + \varepsilon \partial_x^3) } \left(e^{-s  (\partial_x + \varepsilon \partial_x^3) }  v\right)^2 ds.
  \end{align*}
 Hence, using that (see \eqref{opexp})
 $$
\DL = \Le - (\partial_x + \varepsilon \partial_x^3) =\mathcal{O}\left( c_L\frac{\varepsilon^{2 } \partial_x^{5 }}{1+ (\sqrt{\varepsilon}\vert \partial_x\vert)^{\beta_L}}   \right)
 $$
 together with the expansions
 $$
  e^{-s \Le}= 1+\mathcal{O}(s\Le), \qquad  e^{\pm s  (\partial_x + \varepsilon \partial_x^3) } = 1 +\mathcal{O} \left( s  (\partial_x + \varepsilon \partial_x^3) \right) 
 $$
we obtain that
 \begin{align*}
 \int_0^\tau & e^{s \Le} \left( e^{- s \Le}  v\right)^2 ds\\&  = 
  \int_0^\tau e^{s  (\partial_x + \varepsilon \partial_x^3) } \left(e^{-s  (\partial_x + \varepsilon \partial_x^3) }  v\right)^2 ds + \frac{\tau^2}{2} \DL   v^2  - \tau^2      v  \DL v     \\&+\mathcal{O}\left(c_L \tau^3\frac{\varepsilon^{4 } \partial_x^{10 }}{1+ (\sqrt{\varepsilon}\vert \partial_x\vert)^{2\beta_L}}  p(v)\right)+ \mathcal{O}\left(\tau^3 c_L\frac{\varepsilon^{2 } \partial_x^{6 }}{1+ (\sqrt{\varepsilon}\vert \partial_x\vert)^{\beta_L}}  p(v) \right)
   \end{align*}
   for some polynomials  $p$ of degree 2 in $v$.
This yields together with the properties on the filter functions (cf. \eqref{filter}) 
$$
\Psi_{ m_Q }(\sqrt{\varepsilon} \partial_x) = 1 +\mathcal{O}(\tau m_Q(\sqrt{\varepsilon} \partial_x)), \quad \Psi_{ \DL } (\sqrt{\varepsilon} \partial_x)= 1 +\mathcal{O}(\tau \DL(\sqrt{\varepsilon} \partial_x))
$$
yields that the remainder $\mathcal{R}_2(\tau,\varepsilon,u)$ is of the form
\begin{align*}
\mathcal{R}_2(\tau,\varepsilon,u) & = 
\mathcal{O}\left(c_L \tau^3 \frac{\varepsilon \partial_x}{1+ (\sqrt{\varepsilon}\vert \partial_x\vert)^{\beta_Q}}   \frac{\varepsilon^{4 } \partial_x^{10 }}{1+ (\sqrt{\varepsilon}\vert \partial_x\vert)^{2\beta_L}}  p(v)\right)\\& + \mathcal{O}\left(\tau^3 c_L \frac{\varepsilon \partial_x}{1+ (\sqrt{\varepsilon}\vert \partial_x\vert)^{\beta_Q}}   \frac{\varepsilon^{2 } \partial_x^{6 }}{1+ (\sqrt{\varepsilon}\vert \partial_x\vert)^{\beta_L}}  p(v) \right).
\end{align*}
This concludes the proof.
\end{proof}
Using Lemma \ref{lem:osc2} in the expansion of the exact solution \eqref{uexp2} yields together with 
\eqref{resi} and the definition of the numerical flow $\Phi^\tau$ in \eqref{schema2} that
\begin{equation}\label{expu2fin}
u(t_n+\tau) = \Phi^\tau(u(t_n)) +  \mathcal{R}_1(\tau,\varepsilon,u)+ \mathcal{R}_2(\tau,\varepsilon,u)
\end{equation}
where the remainders $ \mathcal{R}_1$ and  $ \mathcal{R}_2$ satisfy the bounds \eqref{2R1} and \eqref{2R2}, respectively.

\subsection{Error analysis}\label{sec:err2}

Let us again denote by $\varphi^t$ the exact flow of  \eqref{BBM} and by $\Phi^\tau$ the numerical flow defined by the scheme \eqref{schema2}, such that
$$
u(t_n+\tau) = \varphi^\tau(u(t_n) ) \quad \text{and}\quad u^{n+1} = \Phi^\tau(u^n).
$$

\subsubsection{Local error analysis}
\begin{lemma} \label{thm:loc2} Fix $r\geq 0$ and let $\beta_0 = \text{min}(1,\beta_Q)$,
$\beta_1 = \text{min}(2,\beta_Q+\beta_L)$ and $\beta_2 = \text{min}(6,\beta_Q+2 \beta_L)$.
Then, the local error  $ \varphi^\tau(u(t_n))- \Phi^\tau(u(t_n))$ satisfies 
\begin{align*}
\Vert & \varphi^\tau(u(t_n))- \Phi^\tau(u(t_n)) \Vert_r   \leq \tau^3 \varepsilon^2 c \left(\sup_{t_n \leq t \leq t_{n+1}}  \Vert  u\Vert_{r+5}\right)+ \tau^3 \varepsilon^2 \varepsilon^{1-\beta_0} c \left(\sup_{t_n \leq t \leq t_{n+1}}  \Vert  u\Vert_{r+7-2\beta_0}\right)\\
& + c_L\tau^3 \varepsilon^2 \varepsilon^{1-\beta_1/2} c \left(\sup_{t_n \leq t \leq t_{n+1}}  \Vert  u\Vert_{r+7-\beta_1}\right) +  c_L\tau^3   \varepsilon^2  \varepsilon^{3-\beta_2/2} c \left(\sup_{t_n \leq t \leq t_{n+1}}  \Vert  u\Vert_{r+11-\beta_2}\right) .
\end{align*}

\end{lemma}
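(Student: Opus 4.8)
The plan is to read off the local error directly from the exact-solution expansion already assembled in the derivation, since $\varphi^\tau(u(t_n)) = u(t_n+\tau)$ by definition of the exact flow. Identity \eqref{expu2fin} then states precisely that
$$
\varphi^\tau(u(t_n)) - \Phi^\tau(u(t_n)) = \mathcal{R}_1(\tau,\varepsilon,u) + \mathcal{R}_2(\tau,\varepsilon,u),
$$
so the claim follows from the triangle inequality together with the remainder bounds \eqref{2R1} and \eqref{2R2}. Indeed the first two terms of the asserted estimate (those carrying $\varepsilon^2$ and $\varepsilon^2\varepsilon^{1-\beta_0}$, at regularities $r+5$ and $r+7-2\beta_0$) are exactly the two contributions of $\Vert\mathcal{R}_1\Vert_r$, while the last two (carrying $c_L\varepsilon^2\varepsilon^{1-\beta_1/2}$ and $c_L\varepsilon^2\varepsilon^{3-\beta_2/2}$, at regularities $r+7-\beta_1$ and $r+11-\beta_2$) are the two contributions of $\Vert\mathcal{R}_2\Vert_r$. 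In this sense the lemma is a one-line corollary of the construction in Section~\ref{sec:dev2}, and the substance of the argument sits entirely in those two bounds.

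I would therefore spend the proof making sure the two remainder estimates are genuinely of the stated orders. For $\mathcal{R}_1$ I would track the terms discarded in passing to \eqref{uexp2}: the square-of-the-small-part contribution $\big(\varepsilon\LeQ e^{-s\Le}\int_0^s e^{s_1\Le}u^2\,ds_1\big)^2$ and the $\mathcal{O}(s^2)$ Taylor tail of the inner Duhamel iterate. After integration in $s$ both are $\mathcal{O}(\tau^3)$ and of operator type $\varepsilon^2\Le\LeQ\LeQ(\cdot)$; estimating this purely by boundedness of $m_Q$ and of the rescaled symbol of $\Le$ gives the plain $\varepsilon^2$ term at regularity $r+5$, whereas invoking the smoothing Lemma~\ref{lemma_reg} on the $\LeQ$-factors trades derivatives for the gain $\varepsilon^{1-\beta_0}$, $\beta_0=\min(1,\beta_Q)$, at regularity $r+7-2\beta_0$.

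For $\mathcal{R}_2$ I would rely on Lemma~\ref{lem:osc2}, which separates the long-wave-limit oscillatory integral (whose closed form \eqref{resi} is precisely the nonlinear part of $\Phi^\tau$) from the correction terms $\frac{\tau^2}{2}\LeQ\,\DL\,v^2$ and $\tau^2\LeQ(v\,\DL v)$; these are absorbed into the scheme \eqref{schema2} through the filter functions satisfying \eqref{filter}. The residual is then controlled by feeding the symbol estimate \eqref{opexp}, $\DL=\mathcal{O}\!\big(c_L\varepsilon^2\partial_x^5/(1+(\sqrt\varepsilon|\partial_x|)^{\beta_L})\big)$, together with $e^{-s\Le}=1+\mathcal{O}(s\Le)$, into Lemma~\ref{lemma_reg}. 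I expect the main obstacle to be exactly this accounting in $\mathcal{R}_2$: one must balance the $\varepsilon$-gains from $\DL$ (one factor of $\beta_L$-smoothing, plus the explicit $\varepsilon^2$) and from each $\varepsilon\LeQ$ (one factor of $\beta_Q$-smoothing) against the high derivative powers $\partial_x^5$ and $\partial_x^{10}$ they carry, so that the capped exponents $\beta_1=\min(2,\beta_Q+\beta_L)$ and $\beta_2=\min(6,\beta_Q+2\beta_L)$ and the matching regularity indices $r+7-\beta_1$ and $r+11-\beta_2$ emerge correctly. Once the derivation is secured, assembling the lemma is immediate.
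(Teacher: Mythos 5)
Your proposal takes exactly the route of the paper: the paper's proof of Lemma~\ref{thm:loc2} is precisely the one-line observation that \eqref{expu2fin} identifies the local error with $\mathcal{R}_1+\mathcal{R}_2$, so the bounds \eqref{2R1} and \eqref{2R2} immediately give the claim. Your additional accounting of where those two remainder bounds come from correctly mirrors the derivation in Section~\ref{sec:dev2} (the discarded inner Duhamel terms for $\mathcal{R}_1$ and Lemma~\ref{lem:osc2} with the symbol estimate \eqref{opexp} for $\mathcal{R}_2$), so nothing is missing.
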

\begin{proof}
The assertion follows from the expansion of the exact solution given in \eqref{expu2fin}
together with the error bounds on $ \mathcal{R}_1$ and  $ \mathcal{R}_2$ in \eqref{2R1} and \eqref{2R2}.
\end{proof}
\subsubsection{Stability analysis}
\begin{lemma} \label{thm:stab2} Fix  $r \geq 1$. The numerical flow $ \Phi^\tau$ defined by the scheme \eqref{schema2} is ${\varepsilon}$-stable in $H^r$ in the sense that for two functions $w \in H^{r+1}$ and $v \in H^r$ we have that 
$$
\Vert \Phi^\tau(w)- \Phi^\tau(v) \Vert_r  \leq  e^{\tau {\varepsilon} B} \Vert  w-v\Vert_r, \qquad B = B(\Vert w\Vert_{r+1}, \Vert v \Vert_r),
$$
where the constant $B$ depends on the $H^{r+1}$ norm of $w$ and  $H^r$ norm of $v$.
\end{lemma}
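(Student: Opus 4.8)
The plan is to extend the energy estimate underlying Lemma~\ref{thm:stab} to the three additional terms of \eqref{schema2}. Writing the second-order flow as the first-order flow \eqref{scheme} plus the corrections $T_A(v)=\tau^2\varepsilon^2\LeQ\Psi_{m_Q}\big(v\,\Psi_{m_Q}\LeQ(v^2)\big)$, $T_B(v)=-\tfrac{\tau^2}{2}\varepsilon\LeQ\Psi_{\DL,m_Q}\DL(v^2)$ and $T_C(v)=\tau^2\varepsilon\LeQ\Psi_{\DL,m_Q}(v\,\DL v)$, and setting $e=w-v$, I would expand the square $\Vert\Phi^\tau(w)-\Phi^\tau(v)\Vert_r^2$. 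Its purely linear part equals $\Vert e\Vert_r^2$, because $e^{-\tau\Le}$ is a Fourier multiplier of modulus one and hence an $H^r$-isometry; the remaining cross terms are inner products of the form $\big(\Lr(\cdot),\Lr e\big)$. The first-order cross term reproduces verbatim the estimate \eqref{todo}, bounded by $\tau\varepsilon\,B\Vert e\Vert_r^2$, so it remains to show that each correction contributes at most $\tau\varepsilon\,B\Vert e\Vert_r^2$. Since each $T_X$ is at most cubic, its difference is multilinear in $w$, $v$ and $e$, and it suffices to treat one representative piece.

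The key tools are the filter bounds \eqref{filter}, which state that $\tau\Psi_{m_Q}\LeQ$ and $\tau\LeQ\DL\Psi_{\DL,m_Q}$ are bounded on $H^r$ uniformly in $\tau$ and $\varepsilon$. The term $T_B$ is the easy one: as all factors are commuting Fourier multipliers, $\LeQ\Psi_{\DL,m_Q}\DL=\LeQ\DL\Psi_{\DL,m_Q}$, so $T_B(v)=-\tfrac{\tau\varepsilon}{2}\,[\tau\LeQ\DL\Psi_{\DL,m_Q}](v^2)$ and therefore $\Vert T_B(w)-T_B(v)\Vert_r\le\tau\varepsilon\,C(\Vert w\Vert_r+\Vert v\Vert_r)\Vert e\Vert_r$ with no loss of derivative. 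For $T_A$ the \emph{inner} $\LeQ$ is absorbed by $\tau\Psi_{m_Q}\LeQ$, while for $T_C$ the factor $\DL v$ is already small by the long wave expansion \eqref{opexp}, $\Vert\DL v\Vert_r\le c_L\varepsilon^{2-\beta_L/2}\Vert v\Vert_{r+5-\beta_L}$; in both cases Lemma~\ref{lemma_reg} supplies the extra gains in $\varepsilon$ coming from the surviving $\varepsilon\LeQ$ factors.

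The main obstacle is the \emph{outer} operator $\LeQ=\partial_x m_Q(\sqrt{\varepsilon}\partial_x)$ in $T_A$ and $T_C$: it is a genuine first-order derivative in the transport-dominated regime (small $\beta_Q$, e.g.\ KdV) and, unlike in $T_B$, it is not paired inside its filter with a compensating $\DL$, so a crude $H^r\to H^r$ estimate either loses a derivative or sacrifices the factor $\tau$. To recover a clean Lipschitz contribution of size $\tau\varepsilon$ without any loss of derivative I would run exactly the energy argument of \eqref{K1}--\eqref{K11}: since $m_Q(i\xi)$ is real and even while $\partial_x$ is skew, $\LeQ$ is skew-adjoint, so integrating by parts moves the outer derivative off the rough factor, and the resulting commutators $[\Lr,\cdot]$ and $[m_Q,\cdot]\partial_x\Lr$ are controlled by Lemma~\ref{lem:lemi}. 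This ensures that the surviving derivative always falls on the smoother argument, so that only $\Vert w\Vert_{r+1}$ enters the constant, as in the first-order case. The delicate point — the part I expect to require the most care — is the bookkeeping of the powers of $\tau$ and $\varepsilon$: one must verify that, after this integration by parts and the nested use of \eqref{filter}, at least one factor $\tau$ and one factor $\varepsilon$ remain in front of $\Vert e\Vert_r^2$.

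Assembling the four contributions gives $\Vert\Phi^\tau(w)-\Phi^\tau(v)\Vert_r^2\le\big(1+\tau\varepsilon\,B\big)\Vert e\Vert_r^2$ with $B=B(\Vert w\Vert_{r+1},\Vert v\Vert_r)$, and taking square roots and exponentiating (a discrete Gronwall step) yields $\Vert\Phi^\tau(w)-\Phi^\tau(v)\Vert_r\le e^{\tau\varepsilon B}\Vert w-v\Vert_r$, which is the asserted $\varepsilon$-stability.
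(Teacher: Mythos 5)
Your overall strategy is the same as the paper's: the published proof consists precisely of the two steps you describe, namely invoking Lemma~\ref{thm:stab} for the first-order part of \eqref{schema2} and appealing to the filter properties \eqref{filter} for the three correction terms, and you in fact supply considerably more detail than the paper's two-sentence argument. Your treatment of $T_B$ is clean: commuting the Fourier multipliers so that the full package $\tau\LeQ\DL\Psi_{\DL,m_Q}$ acts on $w^2-v^2$ yields a genuine $\tau\varepsilon\,B\,\Vert w-v\Vert_r$ contribution with no loss of derivatives, which is exactly what the stability bound requires.

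There is, however, one concrete step in your elaboration that does not close as written, and it concerns $T_C$ (and to a lesser extent the unpaired outer $\LeQ$ in $T_A$). In the Lipschitz difference
$T_C(w)-T_C(v)=\tau^2\varepsilon\LeQ\Psi_{\DL,m_Q}\bigl(e\,\DL w+v\,\DL e\bigr)$ with $e=w-v$, the operator $\DL$ falls on $e$, which you control only in $H^r$; your proposed bound $\Vert\DL e\Vert_r\le c_L\varepsilon^{2-\beta_L/2}\Vert e\Vert_{r+5-\beta_L}$ from \eqref{opexp} costs up to five extra derivatives on the difference, which are not available under the hypotheses $w\in H^{r+1}$, $v\in H^r$. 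The filter bound $\Vert\tau\LeQ\DL\Psi_{\DL,m_Q}f\Vert_r\le\Vert f\Vert_r$ cannot be used here either, because in $T_C$ the operator $\DL$ sits \emph{inside} the product and does not commute with multiplication by $v$; and the commutator machinery of Lemma~\ref{lem:lemi} and \eqref{K1}--\eqref{K11} removes a single skew-adjoint derivative $\partial_x$, not a fifth-order multiplier such as $\DL$. So the difficulty you defer to ``bookkeeping of the powers of $\tau$ and $\varepsilon$'' is in fact a loss of derivatives on $e$, not a counting issue. To be fair, the paper's own proof does not address this point at all, so you have not deviated from its route --- but your proposal, taken at face value, does not yet establish the stated estimate with $B=B(\Vert w\Vert_{r+1},\Vert v\Vert_r)$.
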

\begin{proof} Thanks to Lemma \ref{thm:stab2}  it remains to prove  the stability estimate only on the last three terms in \eqref{schema2}. The latter holds true thanks to the properties \eqref{filter} of the filter functions $\Psi_{m_Q}$ and $\Psi_{\DL}$.
\end{proof}

\subsubsection{Global error estimate}
\begin{theorem} \label{thm:glob2} Let $\beta_0 = \text{min}(1,\beta_Q)$,
$\beta_1 = \text{min}(2,\beta_Q+\beta_L)$ and $\beta_2 = \text{min}(6,\beta_Q+2 \beta_L)$. Then there exists a $\tau_0>0$ such that for all $0<\tau \leq \tau_0$  the global error estimate holds for  $u^n$ defined in \eqref{schema2}\begin{align*}
\Vert u(t_n)- u^n \Vert_{L^2} & \leq 
 \tau^2  t_n   \varepsilon^2  \left[ c \left(\sup_{t_n \leq t \leq t_{n+1}}  \Vert  u\Vert_{r+5}\right)+  \varepsilon^{1-\beta_0} c \left(\sup_{t_n \leq t \leq t_{n+1}}  \Vert  u\Vert_{r+7-2\beta_0}\right)\right.\\
& + c_L \left. \varepsilon^{1-\beta_1/2} c \left(\sup_{t_n \leq t \leq t_{n+1}}  \Vert  u\Vert_{r+7-\beta_1}\right) +  c_L  \varepsilon^{3-\beta_2/2} c \left(\sup_{t_n \leq t \leq t_{n+1}}  \Vert  u\Vert_{r+11-\beta_2}\right)\right]
e^{c_1 t_n  {\varepsilon}} ,
\end{align*}
where $c_1$ depends on the $H^2$ norm of   $u$.
\end{theorem}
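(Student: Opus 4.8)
The plan is to mirror the proof of Theorem~\ref{thm:glob1}: I would feed the local error bound of Lemma~\ref{thm:loc2} into a Lady Windermere's fan argument (see \cite{HLW}), propagating the accumulated errors with the $\varepsilon$-stability estimate of Lemma~\ref{thm:stab2}. Writing $e_n = u(t_n)-u^n$ and using $u(t_{n+1}) = \varphi^\tau(u(t_n))$ together with $u^{n+1} = \Phi^\tau(u^n)$, I split
\begin{align*}
e_{n+1} = \big(\varphi^\tau(u(t_n)) - \Phi^\tau(u(t_n))\big) + \big(\Phi^\tau(u(t_n)) - \Phi^\tau(u^n)\big),
\end{align*}
estimating the first bracket (the local error $d_n$) by Lemma~\ref{thm:loc2} and the second by Lemma~\ref{thm:stab2}, with the stronger $H^{r+1}$ norm placed on the smooth exact solution $u(t_n)$. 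This gives the recursion $\Vert e_{n+1}\Vert_r \leq \Vert d_n\Vert_r + e^{\tau\varepsilon B}\Vert e_n\Vert_r$, which after iteration and summation over the $n = t_n/\tau$ steps converts one power of $\tau$ in the local error into the prefactor $t_n$ and produces the exponential $e^{c_1 t_n\varepsilon}$ from the accumulated stability constants. The four terms on the right-hand side are then inherited one by one from the four contributions to $d_n$ in Lemma~\ref{thm:loc2}.

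The decisive point is that the stability constant $B = B(\Vert u(t_n)\Vert_{r+1},\Vert u^n\Vert_r)$ of Lemma~\ref{thm:stab2} still depends on the numerical solution through $\Vert u^n\Vert_r$, so the fan only closes once $u^n$ is known to remain bounded uniformly over the long interval $t_n \leq 1/\varepsilon$. As in Theorem~\ref{thm:glob1}, I would secure this a priori bound by a bootstrap: first establish convergence in the intermediate norm $H^1$ with a reduced order in $\tau$ but a full gain of one factor $\varepsilon$, and then use $\Vert u^n\Vert_{H^1} \leq \Vert e_n\Vert_{H^1} + \Vert u(t_n)\Vert_{H^1}$ to deduce boundedness of $u^n$ in $H^1$ over $t_n \leq 1/\varepsilon$; the regularity hypothesis on $u$ provides the smoothness required for this step (cf. \cite{Lubich08}).

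Since Lemma~\ref{thm:stab2} is only available for $r\geq 1$, the stated $L^2$ estimate is recovered exactly as in Theorem~\ref{thm:glob1}, combining the $H^r$ fan estimate with the $H^1$ a priori boundedness of $u^n$ to descend to $H^0 = L^2$. The key scaling that makes the long time horizon admissible is that each local error carries $\varepsilon^2$ whereas the stability exponent carries only the single factor $\varepsilon$: on $t_n \leq 1/\varepsilon$ the exponential $e^{c_1 t_n\varepsilon}$ stays $\mathcal{O}(1)$, while the net error $\sim \tau^2 t_n \varepsilon^2$ in fact \emph{gains} in $\varepsilon$ on the natural time scale.

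The main obstacle is the loss of derivative in the nonlinearity, which is exactly why Lemma~\ref{thm:stab2} can only be quasilinear, requiring the stronger norm on one of its arguments. Having to control the numerical iterate in the higher norm demanded by the stability estimate --- and not merely in the norm in which the error is measured --- is the delicate part, and is what forces the bootstrap above. Here the filter functions $\Psi_{m_Q}$ and $\Psi_{\DL}$ appearing in \eqref{schema2}, whose mapping properties \eqref{filter} were built in precisely for this purpose, ensure that the three extra second-order terms do not destroy the stability already established for the first-order part.
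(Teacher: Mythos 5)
Your proposal follows exactly the paper's own argument: the proof of Theorem~\ref{thm:glob2} is a one-line appeal to the local error bound of Lemma~\ref{thm:loc2} combined with the stability estimate of Lemma~\ref{thm:stab2} via a Lady Windermere's fan argument. In fact you supply more detail than the paper does here, in particular the $H^1$ bootstrap to control $\Vert u^n\Vert$ a priori over $t_n\leq 1/\varepsilon$ and the descent to $L^2$, which the paper only spells out in the proof of Theorem~\ref{thm:glob1} and tacitly reuses.
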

\begin{proof}
The assertion follows by the local error bound given in Lemma \ref{thm:loc2} together with the stability estimate in Lemma \ref{thm:stab2} via a Lady Windermere's fan argument, see, e.g., \cite{HLW}.\end{proof}

\section{Numerical experiments}\label{sec:num} 
We underline our theoretical results with numerical experiments. As a model problem we take the BBM equation \eqref{bbm} and solve it with our first - and second-order long wave limit preserving schemes \eqref{scheme} and \eqref{schema2}, respectively,  for various values of $\varepsilon$ on  long time scales, i.e., up to $T= \frac{1}{\varepsilon}$. For the spatial discretisation we employ a standard pseudo spectral method. The numerical findings confirm the convergence order stated in Theorem \ref{thm:glob1} and Theorem \ref{thm:glob2}, respectively.

\begin{figure}[h!]
\centering
\includegraphics[width=0.471\linewidth]{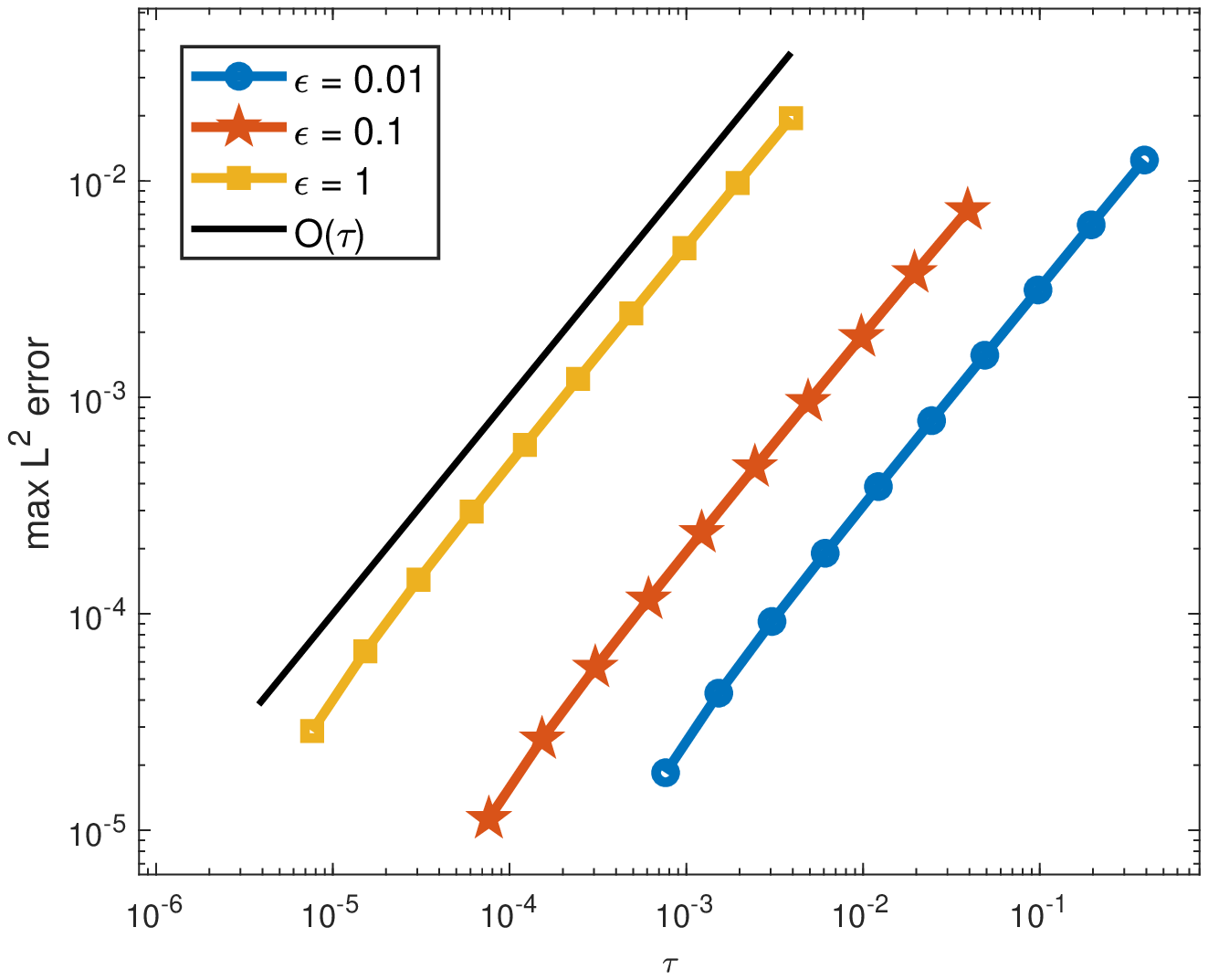}
\hfill
\includegraphics[width=0.471\linewidth]{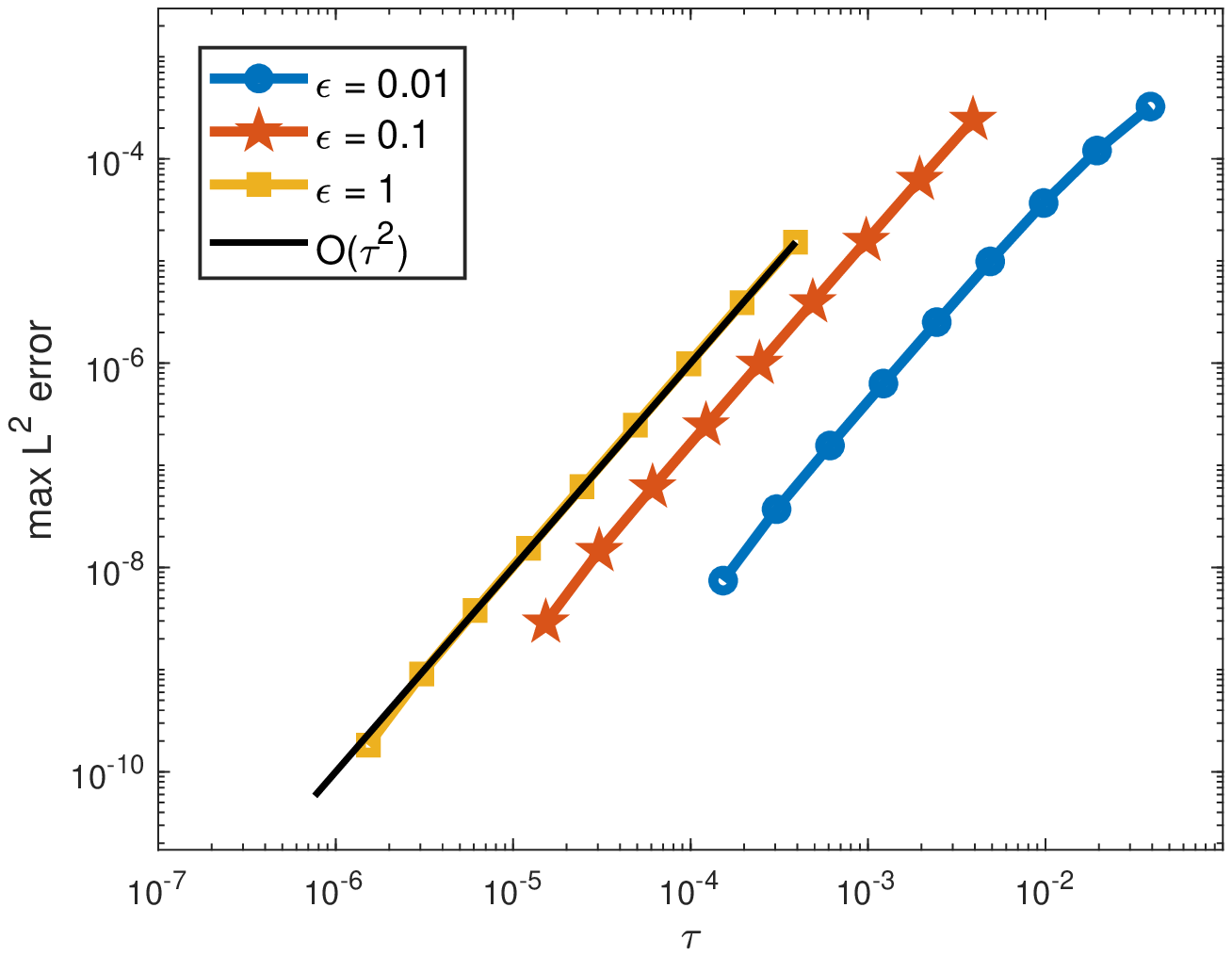}
\caption{Convergence plot ($L^2$ error versus step size) of the  first-order LWP scheme  \eqref{scheme}  (left) and the second-order LWP scheme  \eqref{schema2} (right) on  long time scales $t= \frac{1}{\varepsilon}$ for various values of $\varepsilon$. The black solid line corresponds to order one (left) and two (right), respectively. }\label{fig}
\end{figure}

\subsection*{Acknowledgements}

{\small
This project has received funding from the European Research Council (ERC) under the European Union’s Horizon 2020 research and innovation programme (grant agreement No. 850941).
}

\end{document}